\spnewtheorem{assumption}{Assumption}{\bf}{\it}
\spnewtheorem{corrolary}{Corrollary}{\bf}{\it}
\providecommand{\rset}[1]{\mathbb{R}^}
\providecommand{\abs}[1]{\lvert#1\rvert}
\providecommand{\norm}[1]{\lVert#1\rVert}
\begin{document}

\title{Efficient random coordinate descent algorithms for large-scale
structured nonconvex optimization
\thanks{The research leading to these results has
received funding from: the European Union (FP7/2007--2013) EMBOCON
under grant agreement no 248940; CNCS (project TE-231,
19/11.08.2010); ANCS (project PN II, 80EU/2010); POSDRU/89/1.5/S/62557. \\
 }}
\titlerunning{Random coordinate descent algorithms for nonconvex
optimization}

\author{Andrei Patrascu \and Ion Necoara}
\authorrunning{A. Patrascu \and I. Necoara}

\institute{A. Patrascu and I. Necoara  are  with the Automatic
Control and Systems Engineering Department, University Politehnica
Bucharest, 060042 Bucharest, Romania, Tel.: +40-21-4029195,  Fax:
+40-21-4029195; \email{\texttt{\{ion.necoara,
andrei.patrascu\}@acse.pub.ro}}}

\date{Revised: January 2014}

\maketitle
%%%%%%%%%%%%%%%%%%%%%%%%%%%%%%%%%%%%%%%%%%%%%%%%%%%%%%%%%%%%%%%%%%%%%%%%%%%%%%%%%%%%%%%%%%%%%%%%%%%%%%%%%%%%%%%%%%%%%%%%%%%%%%%%%

\begin{abstract}
In this paper we analyze several new methods for solving nonconvex
optimization problems with the objective function consisting of a sum of
two  terms: one is nonconvex and smooth, and another is convex but
simple and its structure is known. Further, we consider both cases:
unconstrained and linearly constrained nonconvex problems. For
optimization problems of the above structure,  we propose random
coordinate descent algorithms and analyze  their convergence
properties. For the general case, when the objective function is
nonconvex and composite we prove asymptotic convergence for the
sequences generated by our algorithms to stationary points  and
sublinear rate of convergence in expectation for some optimality
measure. We also present
extensive numerical experiments for evaluating the performance of
our algorithms in comparison with state-of-the-art methods.
\end{abstract}
%%%%%%%%%%%%%%%%%%%%%%%%%%%%%%%%%%%%%%%%%%%%%%%%%%%%%%%%%%%%%%%%%%%%%%%%%%%%%%%%%%%%%%%%%%%%%%%%%%%%%%%%%%%%%%%%%%%%%%%%%%%%%%%%%

\section{Introduction}

Coordinate descent methods are among the first algorithms used for
solving general minimization problems  and are some of the most
successful in the large-scale optimization field \cite{Ber:99}.
Roughly speaking, coordinate descent methods  are based on the
strategy of updating one (block) coordinate of the vector of
variables per iteration using some index selection procedure (e.g.
cyclic, greedy, random). This often reduces drastically the
complexity per iteration and memory requirements, making these
methods simple and scalable. There exist numerous papers dealing
with the convergence analysis of this type of methods
\cite{Aus:76,LinLuc:09,Nec:13,NecPat:12,Pow:73,TseYun:09}, which
confirm the difficulties encountered in proving the convergence for
nonconvex and nonsmooth objective functions. For instance, regarding
coordinate minimization of nonconvex functions, Powell \cite{Pow:73}
provided some examples of differentiable functions whose properties
lead the algorithm to a closed loop. Also,  proving  convergence of
coordinate descent methods for minimization of nondifferentiable
objective functions is challenging \cite{Aus:76,FerRic:13}. However,
for nonconvex and nonsmooth objective functions with certain
structure (e.g. composite objective functions) there are available
convergence results for coordinate descent methods based on greedy
index selection \cite{Bec:12,LinLuc:09,TseYun:09} or random index
selection \cite{LuXia:13}. Recently, Nesterov \cite{Nes:10} derived
complexity results for random coordinate gradient descent methods
for  solving smooth and convex optimization problems. In
\cite{RicTac:11} the authors generalized Nesterov's results to
convex problems with composite objective functions. An extensive
complexity analysis of coordinate gradient descent methods for
solving linearly constrained optimization problems with convex
(composite) objective function can be found
in~\cite{Bec:12,Nec:13,NecPat:12,NecNes:12}.

In this paper we also consider large-scale nonconvex optimization
problems with the objective function consisting of  a sum of two
terms: one is nonconvex, smooth and given by a black-box oracle, and
another is convex but simple and its structure is known. Further, we
analyze unconstrained   but also  singly linearly  constrained
nonconvex problems. We also assume  that the dimension of the
problem is so large that traditional optimization methods cannot be
directly employed since basic operations, such as the updating of
the gradient, are too computationally expensive. These types of
problems arise in many fields such as data analysis (classification,
text mining) \cite{Bon:11,ChaSin:08}, systems and control theory
(optimal control, pole assignment by static output feedback)
\cite{FaiMar:12,JudRay:08,NecCli:13,Par:97}, machine learning
\cite{ChaSin:08,NecPat:12,RicTac:13,RicTac:13_2,ShaZha:13,Vap:95}
and truss topology design \cite{KocOut:06,RicTac:12}. The goal of
this paper is to analyze several  new random coordinate gradient
descent methods suited for large-scale nonconvex problems with
composite objective
function. %Up to our knowledge, there is no convergence analysis of
%random coordinate descent algorithms for solving nonconvex nonsmooth
%optimization problems.
Recently, after our paper came under review, a variant of random
coordinate descent method for solving composite nonconvex problems
was also proposed in  \cite{LuXia:13}.  For our coordinate descent
algorithm, which is designed to minimize unconstrained composite
nonconvex objective functions, we prove asymptotic convergence of
the generated sequence to stationary points and sublinear rate of
convergence in expectation for some optimality measure.
We also provide convergence
analysis for a coordinate descent method designed for solving singly
linearly constrained nonconvex  problems and obtain similar results
as in the unconstrained case. Note that our analysis is very
different from the convex case
\cite{Nec:13,NecPat:12,NecNes:12,Nes:10,RicTac:11} and is based on
the notion of optimality measure and a supermartingale convergence
theorem.  Furthermore, unlike to other coordinate descent methods
for nonconvex problems, our algorithms offer some important
advantages, e.g. due to the randomization our algorithms are simpler
and are adequate for modern computational architectures. We also
present the results of preliminary computational experiments, which
confirm the superiority of our methods compared with other
algorithms for large-scale nonconvex optimization.

\noindent \textbf{Contribution}. The contribution of the paper can
be summarized as follows:\vspace*{-0.1cm}
\begin{enumerate}
\item[(a)] For unconstrained problems we propose a 1-random coordinate descent method (1-RCD),
that involves at each iteration the solution of an optimization
subproblem with respect to only one (block) variable while keeping
all others fixed. We show that this solution can be usually computed
in closed form (Section \ref{randcoordmet}).

\item[(b)] For the linearly constrained case we propose a
 2-random coordinate descent method (2-RCD),
that involves at each iteration the solution of
 a subproblem depending on two (block) variables while keeping all other
variables fixed. We show that in most cases this solution can be
found in linear time (Section \ref{2-randcoordmet}).

\item[(c)] For each of the algorithms we introduce some optimality measure and devise a
convergence analysis  using this framework. In particular, for both
algorithms, (1-RCD) and (2-RCD), we establish asymptotic convergence
of the generated sequences to stationary points (Theorems
\ref{thconv} and \ref{lemma3}) and sublinear rate of convergence for
the expected values of the corresponding optimality measures
(Theorems \ref{thcr} and \ref{constrainedrate}).
\end{enumerate}

\noindent \textbf{Content}. The structure of the paper is as
follows. In Section 2 we introduce a 1-random coordinate descent
algorithm for unconstrained minimization of nonconvex composite
functions. Further, we analyze the convergence properties of the
algorithm under standard assumptions. In Section 3 we derive a 2-random coordinate
descent method for solving singly linearly constrained nonconvex
problems and analyze  its convergence. In Section 4 we report
numerical results on large-scale eigenvalue complementarity
problems, which is an important application in  control theory.

%%%%%%%%%%%%%%%%%%%%%%%%%%%%%%%%%%%%%%%%%%%%%%%%%%%%%%%%%%%%%%%%%%%%%%%%%%%%%%%%%%%%%%%%%%%%%%%%%%

%\subsection{Preliminaries}\label{preliminaries}

\noindent \textbf{Notation}. We consider the space $\rset^n$
composed by column vectors. For $x,y \in \rset^n$ we denote the
scalar product by $\langle x,y \rangle = x^T y$ and
$\norm{x}=(x^Tx)^{1/2}$. We use the same notation $\langle
\cdot,\cdot \rangle$ and $\norm{\cdot}$ for scalar products and
norms in spaces of different dimensions. For some norm
$\norm{\cdot}_\alpha$ in $\rset^n$, its dual norm is defined by
$\norm{y}^*_\alpha= \max_{\norm{x}_\alpha=1} \langle y,x \rangle$.
We consider the following decomposition of the variable dimension:
$n = \sum_{i=1}^N n_i$. Also, we denote a block decomposition of $n
\times n$ identity matrix by $I_n= \left[ U_1 \dots U_N \right]$,
where $U_i \in \rset^{n \times n_i}$. For brevity we use the
following notation: for all $x \in \rset^n$ and $i, j = 1, \dots,
N$, we denote:
\begin{align*}
 x_i &= U_i^T x \in \rset^{n_i}, \quad \quad \quad \quad \ \
 \nabla_i f(x)= U_i^T \nabla f(x) \in \rset^{n_i} \\
 x_{ij}&=\left[x_i^T \ x_j^T \right]^T \in \rset^{n_i+n_j},
 \quad \nabla_{ij} f(x)= \left[ \nabla_i f(x)^T \ \nabla_j f(x)^T\right]^T \in \rset^{n_i+n_j}.
\end{align*}

%%%%%%%%%%%%%%%%%%%%%%%%%%%%%%%%%%%%%%%%%%%%%%%%%%%%%%%%%%%%%%%%%%%%%%%%%%%%%%%%
%%%%%%%%%%%%%%%%%%%%%%%%%%%%%%%%%%%%%%%%%%%%%%%%%%%%%%%%%%%%%%%%%%%%%%%%%%%%%%%%%%%%%%%%%%%%%%%%%%

\section{Unconstrained minimization of composite objective functions}
\label{sectionRCD}

In this section we analyze  a variant of random block coordinate
gradient descent method, which we call the \textit{1-random
coordinate descent method} (1-RCD), for solving large-scale
unconstrained nonconvex problems with composite objective function.
The method  involves at each iteration the solution of an
optimization subproblem only with respect to one (block) variable
while keeping all other variables fixed. After discussing several
necessary mathematical preliminaries, we introduce  an optimality
measure, which will be the basis for the construction and analysis
of Algorithm (1-RCD). We establish asymptotic convergence of the
sequence generated by Algorithm (1-RCD) to a stationary point  and
then we show sublinear rate of convergence in expectation for the
corresponding optimality measure. For some well-known particular
cases of nonconvex objective functions arising frequently in
applications, the complexity per iteration  of our  Algorithm
(1-RCD) is of order $\mathcal{O}(\max\limits_{i} n_i)$.

\subsection{Problem formulation}

The problem of interest in this section is the unconstrained nonconvex
minimization problem with composite objective function:
\begin{equation}\label{model}
 F^* = \min\limits_{x \in \rset^n} F(x) \ \left(:= f(x) + h(x)\right),
\end{equation}
where the function $f$ is smooth and $h$ is a convex, separable,
nonsmooth function. Since $h$ is nonsmooth, then for any $x \in
dom(h)$ we denote by $\partial h(x)$ the subdifferential (set of
subgradients) of $h$ at $x$. The smooth and nonsmooth components in
the objective function of \eqref{model} satisfy the following
assumptions:
\begin{assumption}
\label{mainassumption}
\begin{enumerate}
\item[(i)]
The function $f$ has block coordinate Lipschitz continuous gradient,
i.e. there are  constants $L_{i}>0$ such that:
\begin{equation*}
 \norm{\nabla_{i} f(x+ U_is_{i}) - \nabla_{i}f(x)}  \le L_{i}
 \norm{s_{i}} \qquad \forall s_{i} \in \rset^{n_i}, \; x \in
 \rset^n, \; i=1,\dots,N.
\end{equation*}
\item[(ii)]
The function $h$ is  proper, convex, continuous and block separable:
\begin{equation*}
h(x) = \sum\limits_{i=1}^N h_i(x_i) \quad  \forall x \in \rset^n,
\end{equation*}
where  the functions $h_i: \rset^{n_i} \to \rset^{}$ are convex for
all $i=1, \dots, N$.
\end{enumerate}
\end{assumption}
 These assumptions are typical for the coordinate descent framework
and the reader can find similar variants in
\cite{Nec:13,NecPat:12,Nes:10,LuXia:13,TseYun:09}. An immediate
consequence of Assumption \ref{mainassumption} (i) is the following
well-known inequality \cite{Nes:04}:
\begin{equation}\label{coupledlemma}
 \abs{f(x + U_is_{i})  - f(x) - \langle \nabla_{i}f(x), s_{i} \rangle}
  \le \frac{L_{i}}{2}\norm{s_{i}}^2
 \quad \forall s_{i} \in \rset^{n_i},\;  x \in \rset^n.
\end{equation}
Based on this quadratic approximation of function $f$ we get the
inequality:
\begin{equation}\label{objquad}
 F(x+U_is_i)\le f(x)+\langle \nabla_{i} f(x),s_{i} \rangle+\frac{L_{i}}{2} \norm{s_{i}}^2
 +h(x+U_is_i) \quad \forall s_{i} \in \rset^{n_i}, \; x \in \rset^n.
\end{equation}

\noindent Given  local Lipschitz constants $L_i>0$ for $i=1, \dots,
N$, we define the vector $L=[L_1 \dots L_N]^T \in \rset^N$, the
diagonal matrix $D_L = \text{diag}(L_1 I_{n_1}, \dots, L_N I_{n_N})
\in \rset^{n \times n}$ and the following pair of dual norms:
\begin{equation*}
\norm{x}_L = \left( \sum_{i=1}^N L_i \norm{x_i}^2 \right)^{1/2} \;\;
\forall x \in \rset^n,  \quad \norm{y}_L^* = \left( \sum_{i=1}^N
L_i^{-1} \norm{y_i}^2 \right)^{1/2}  \;\; \forall y \in \rset^n.
\end{equation*}

Under Assumption \ref{mainassumption}, we can state  the first order
necessary optimality conditions for the nonconvex optimization
problem \eqref{model}: if $x^* \in \rset^n$ is a local minimum for
\eqref{model}, then the following  relation holds
\begin{equation*}
0 \in \nabla f(x^*) + \partial h(x^*).
\end{equation*}
Any vector $x^*$ satisfying this relation is called a
\textit{stationary point} for nonconvex problem \eqref{model}.

%%%%%%%%%%%%%%%%%%%%%%%%%%%%%%%%%%%%%%%%%%%%

\subsection{A 1-random coordinate descent algorithm}
\label{randcoordmet}

We analyze a variant of random coordinate descent method suitable
for solving large-scale nonconvex problems of the form
\eqref{model}. Let $i \in \{1, \dots, N\}$ be a random variable and
$p_{i_k} = \text{Pr}(i=i_k)$ be its probability distribution. Given
a point $x$, one block is chosen randomly with respect to the
probability distribution $p_i$ and the quadratic model
\eqref{objquad} derived from the composite objective function is
minimized with respect to this block of coordinates (see also
\cite{Nes:10,RicTac:11}). Our method has the following iteration:
given an initial point $x_0$, then for all $k \geq 0$
\begin{equation*}\label{algorithm}
\boxed{
\begin{split}
& \textbf{Algorithm (1-RCD)}\\
&1.\ \text{Choose randomly a block of coordinates}\  i_k  \
\text{with
probability}\  p_{i_k}\\
 &2.\ \text{Set} \ x^{k+1} = x^k + U_{i_k} d_{i_k},
 \end{split}}
 \end{equation*}
where the direction $d_{i_k}$ is computed as follows:
\begin{equation}\label{extdir}
d_{i_k} = \arg \min_{s_{i_k}\in \rset^{n_{i_k}}} f(x^k) +
\langle\nabla_{i_k} f (x^k), s_{i_k} \rangle +
\frac{L_{i_k}}{2}\norm{s_{i_k}}^2 + h(x^k + U_{i_k} s_{i_k}).
\end{equation}
Note that the  direction $d_{i_k}$ is a minimizer of the quadratic
approximation model  given in \eqref{objquad}. Further, from
Assumption \ref{mainassumption} (ii) we see that $ h(x^k + U_{i_k}
s_{i_k}) = h_{i_k} (x_{i_k}^k + s_{i_k}) + \sum_{i \not = i_k}
h_i(x_i^k)$ and thus for computing $d_{i_k}$ we only need to know
the function $h_{i_k}(\cdot)$. An important property of our
algorithm is that for certain particular cases of function $h$, the
complexity per iteration of Algorithm (1-RCD) is very low. In
particular, for certain simple functions $h$, very often met in many
applications from signal processing, machine learning and optimal
control, the direction $d_{i_k}$ can be computed in closed form,
e.g.:
\begin{enumerate}
 \item[(I)]
For some $l, u \in \rset^n$, with $l \le u$, we consider the box
indicator function
\begin{align}
\label{boxfct} & h(x) =\begin{cases}
          0  &  \text{if} \;\;  l \le x \le u\\
       \infty           & \text{otherwise}.
       \end{cases}
\end{align}
In this case the direction $d_{i_k}$ has the explicit  expression:
\[ d_{i_k} =\left[x_{i_k}^k - \frac{1}{L_{i_k}}\nabla_{i_k}
f(x^k) \right]_{[l_{i_k},\ u_{i_k}]}  \quad \forall i_k=1,\dots,N,\]
where $[x]_{[l, \ u]}$ is the orthogonal projection of vector $x$ on
box set $[l, \ u]$.

\item [(II)]
 Given a nonnegative scalar $\beta \in \rset^{}_{+}$, we consider
 the $\ell_1$-regularization function defined by the 1-norm
\begin{equation} \label{regfct}
h(x) = \beta \norm{x}_1.
\end{equation}
In this case, considering  $n=N$,  the direction $d_{i_k}$ has the
explicit expression:
\[ d_{i_k} = \text{sgn}(t_{i_k}) \cdot \max \left\{\abs{t_{i_k}} -
\frac{\beta}{L_{i_k}}, \; 0 \right\} - x_{i_k}  \quad \forall
i_k=1,\dots,n,
 \]
where $t_{i_k}= x_{i_k} - \frac{1}{L_{i_k}}\nabla_{i_k}f(x^k)$.
\end{enumerate}
In  these examples the arithmetic complexity of computing the next
iterate $x^{k+1}$, once $\nabla_{i_k}f(x^k)$ is known, is of order
$\mathcal{O}(n_{i_k})$.  The reader can find other favorable
examples of nonsmooth functions $h$ which preserve the low iteration
complexity of Algorithm (1-RCD) (see also \cite{NecPat:12,TseYun:09}
for other examples). Note that most of the  (coordinate descent)
methods designed for solving nonconvex problems usually have
complexity per iteration at least of order $\mathcal{O}(n)$ (see
e.g. \cite{TseYun:09},  where the authors analyze a greedy
coordinate descent method). Coordinate descent methods that have
similar complexity per iteration as our random method can be found
e.g. in \cite{TseYun:09_2}, where the index selection is made
cyclically (Gauss-Seidel rule). But Algorithm (1-RCD) also offers
other important advantages, e.g. due to the randomization the
algorithm is adequate for modern computational architectures (e.g
distributed and parallel architectures) \cite{NecCli:13,RicTac:13}.

We assume that the sequence of random variables $i_0, \dots, i_k$
are i.i.d. In the sequel, we use the notation $\xi^k$ for the entire
history of random index selection
\begin{equation*}
    \xi^k = \left\lbrace i_0, \dots, i_{k} \right\rbrace
\end{equation*}
and notation  \[ \phi^k = E \left[ F(x^k) \right], \] for the
expectation taken w.r.t. $\xi^{k-1}$. Given $s, x \in \rset^n$, we
introduce the following function and the associated map (operator):
\begin{align}
&\psi_L(s;x) = f(x) + \langle \nabla f(x), s\rangle + \frac{1}{2}\norm{s}_L^2 + h(x+s),\nonumber\\
& d_L(x) = \arg\min\limits_{s \in \rset^n} f(x) + \langle \nabla f(x),
s\rangle + \frac{1}{2}\norm{s}_L^2 + h(x+s) \label{dirtotal}.
\end{align}
Based on this map, we now introduce an \textit{optimality measure}
which will be the basis for the analysis of Algorithm (1-RCD):
\begin{equation*}
 M_1(x,L) = \norm{D_L \cdot d_L(x)}_L^*.
\end{equation*}

\noindent The map $M_1(x,L)$ is an optimality measure for
optimization problem \eqref{model} in the sense that it is positive
for all nonstationary points and zero for stationary points (see
Lemma \ref{lemmaconv} below):
\begin{lemma}
\label{lemmaconv} For any given vector $\tilde{L} \in \rset^N$ with
positive entries, a vector $x^* \in \rset^n$ is a stationary point
for problem \eqref{model} if and only if the value
$M_1(x^*,\tilde{L})=0$.
\end{lemma}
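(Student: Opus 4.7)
The plan is to reduce the claim to a simple equivalence for the direction $d_{\tilde{L}}(x^*)$ itself, and then to use the optimality conditions of the strongly convex subproblem that defines it.

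First I would unpack the optimality measure. Since $D_L$ is block diagonal with the block $L_i I_{n_i}$, for any vector $s \in \rset^n$ the $i$-th block of $D_L s$ is $L_i s_i$, hence
\begin{equation*}
\norm{D_L s}_L^* = \Bigl(\sum_{i=1}^N L_i^{-1}\norm{L_i s_i}^2\Bigr)^{1/2} = \Bigl(\sum_{i=1}^N L_i \norm{s_i}^2\Bigr)^{1/2} = \norm{s}_L.
\end{equation*}
Applied to $s = d_L(x)$, this identity gives $M_1(x,L) = \norm{d_L(x)}_L$. Because all $L_i>0$, the quantity $M_1(x^*,\tilde L)$ vanishes if and only if $d_{\tilde L}(x^*)=0$.

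Next I would write the first-order optimality condition for the unique minimizer $d_{\tilde L}(x^*)$ in \eqref{dirtotal}. The objective $\psi_{\tilde L}(\cdot;x^*)$ is strongly convex in $s$ (the quadratic term $\frac{1}{2}\norm{s}_{\tilde L}^2$ is strongly convex and $h$ is convex by Assumption \ref{mainassumption}(ii)), so $d_{\tilde L}(x^*)$ is well defined and is characterized by the inclusion
\begin{equation*}
0 \in \nabla f(x^*) + D_{\tilde L}\, d_{\tilde L}(x^*) + \partial h\bigl(x^* + d_{\tilde L}(x^*)\bigr).
\end{equation*}

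Finally I would chain the two facts. If $M_1(x^*,\tilde L)=0$, then by the first step $d_{\tilde L}(x^*)=0$, and substituting into the inclusion yields $0\in\nabla f(x^*)+\partial h(x^*)$, i.e.\ $x^*$ is stationary. Conversely, if $x^*$ is stationary, then $s=0$ satisfies the inclusion $0\in\nabla f(x^*)+D_{\tilde L}\cdot 0+\partial h(x^*+0)$; by uniqueness of the minimizer of the strongly convex function $\psi_{\tilde L}(\cdot;x^*)$, we conclude $d_{\tilde L}(x^*)=0$ and hence $M_1(x^*,\tilde L)=\norm{d_{\tilde L}(x^*)}_{\tilde L}=0$. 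There is no real obstacle here; the only point that deserves care is justifying existence and uniqueness of $d_{\tilde L}(x^*)$ so that the subdifferential calculus (sum rule) applies cleanly, which is ensured by strong convexity of the quadratic part and convexity plus properness of $h$.
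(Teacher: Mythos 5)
Your proof is correct, but it reaches the conclusion by a different route than the paper, and the difference is worth noting. The paper proves the direction ``stationary $\Rightarrow M_1=0$'' by contradiction: it assumes $d_{\tilde L}(x^*)\neq 0$, writes the optimality conditions of \eqref{model} and of \eqref{dirtotal}, subtracts them, pairs with $d_{\tilde L}(x^*)$, and uses monotonicity of the subdifferential of the convex function $h$ (the term $\langle g(x^*+d_{\tilde L}(x^*))-g(x^*),\, d_{\tilde L}(x^*)\rangle$ is nonnegative) to force $\norm{d_{\tilde L}(x^*)}_{\tilde L}^2=0$. You instead observe that the first-order inclusion $0\in\nabla f(x^*)+D_{\tilde L}s+\partial h(x^*+s)$ is both necessary and \emph{sufficient} for optimality in the convex subproblem, so stationarity of $x^*$ means $s=0$ satisfies it, and uniqueness of the minimizer of the $1$-strongly convex $\psi_{\tilde L}(\cdot;x^*)$ gives $d_{\tilde L}(x^*)=0$ directly. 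Your argument is more economical (no contradiction, no monotonicity computation) and it treats both implications symmetrically through the single chain $M_1(x^*,\tilde L)=0\Leftrightarrow d_{\tilde L}(x^*)=0\Leftrightarrow 0\in\nabla f(x^*)+\partial h(x^*)$; you also make explicit the identity $\norm{D_{\tilde L}s}_{\tilde L}^*=\norm{s}_{\tilde L}$ and the forward implication, both of which the paper leaves as ``easily shown.'' The paper's monotonicity argument buys nothing extra here beyond avoiding an appeal to the sufficiency of first-order conditions for convex problems; both proofs rely on the same underlying facts (properness, continuity and convexity of $h$ so the sum rule applies, and strong convexity of the quadratic term), which you correctly flag as the only points requiring care.
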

\begin{proof}: Based on  the optimality conditions of subproblem \eqref{dirtotal},
it can be easily shown that if $M_1(x^*,\tilde{L})=0$, then  $x^*$ is a
stationary point for the original problem \eqref{model}. We prove
the converse implication  by contradiction. Assume that $x^*$ is a
stationary point for \eqref{model} and $M_1(x^*,\tilde{L})>0$. It follows
that $d_{\tilde{L}}(x^*)$ is a nonzero solution of subproblem
\eqref{dirtotal}. Then, there exist the subgradients $g(x^*) \in
\partial h(x^*)$ and  $g(x^* + d_{\tilde{L}}(x^*)) \in
\partial h(x^* + d_{\tilde{L}}(x^*))$ such that the optimality conditions for optimization
problems \eqref{model} and \eqref{dirtotal} can be written as:
\begin{equation*}
\begin{cases}
 \nabla f(x^*) + g(x^*)= 0 \\
 \nabla f(x^*) + D_{\tilde{L}} d_{\tilde{L}}(x^*) + g(x^* + d_{\tilde{L}}(x^*))=0.
\end{cases}
\end{equation*}
Taking the difference of the two relations above and considering the
inner product with $d_{\tilde{L}}(x^*) \not=0$ on both sides of the equation,
we get:
\begin{equation*}
\norm{d_{\tilde{L}}(x^*)}_{\tilde{L}}^2 + \langle g(x^* + d_{\tilde{L}}(x^*)) -
g(x^*), d_{\tilde{L}}(x^*) \rangle =0.
\end{equation*}
From convexity of the function $h$ we see that both terms in the
above sum are nonnegative and thus  $d_{\tilde{L}}(x^*) =0$, which contradicts
our hypothesis. In conclusion $M_1(x^*,\tilde{L})=0$. \qed
\end{proof}

Note that $\psi_L(s;x)$ is an  $1$-strongly convex function in the
variable $s$ w.r.t. norm $\norm{\cdot}_L$ and thus $d_L(x)$ is
unique and the following inequality holds:
\begin{equation}\label{strongconvex}
\psi_L(s;x) \ge \psi_L(d_L(x);x) + \frac{1}{2} \norm{d_L(x)-s}_L^2
\quad  \forall x, s \in \rset^n.
\end{equation}

%%%%%%%%%%%%%%%%%%%%%%%%%%%%%%%%%%%%%%%%%%%%%%%%%%%%%%%%

\subsection{Convergence  of Algorithm (1-RCD)}

In this section, we analyze the  convergence properties of Algorithm
(1-RCD). Firstly, we prove the asymptotic convergence of the
sequence generated by  Algorithm (1-RCD) to stationary points. For
proving the asymptotic convergence we use the following
supermartingale convergence result of Robbins and Siegmund (see
\cite[Lemma 11 on page 50]{Pol:87}):
\begin{lemma}
\label{mart} Let $v_k, u_k$ and $\alpha_k$ be three sequences of
nonnegative random variables such that
\[ E[v_{k+1} | {\cal F}_k] \leq (1+\alpha_k) v_k - u_k
\;\; \forall k \geq 0 \; \text{a.s.} \;\;\;  \text{and} \;\;\;
\sum_{k=0}^\infty \alpha_k < \infty \;  \text{a.s.}, \]  where
${\cal F}_k$ denotes the collections $v_0, \dots, v_k, u_0, \dots,
u_k$, $\alpha_0, \dots, \alpha_k$. Then, we have $\lim_{k \to
\infty} v_k = v$ for a random variable $v \geq 0$ a.s. and \;
$\sum_{k=0}^\infty u_k < \infty$ a.s.
\end{lemma}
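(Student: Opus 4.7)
The plan is to reduce the hypothesis to a statement about a genuine nonnegative supermartingale, then invoke Doob's almost-sure convergence theorem and peel off the two desired conclusions from the resulting limit. The affine recursion $E[v_{k+1}\mid\mathcal{F}_k]\le(1+\alpha_k)v_k-u_k$ is not itself a supermartingale relation because of the multiplicative factor $(1+\alpha_k)$ and the nonvanishing drift $-u_k$, so the first step is to rescale by the product $P_k=\prod_{i=0}^{k-1}(1+\alpha_i)$ (with $P_0=1$). The crucial observation that makes this rescaling legitimate is that $\sum_k\alpha_k<\infty$ a.s.\ forces $P_k$ to converge a.s.\ to some finite, strictly positive limit $P_\infty$, so dividing by $P_k$ preserves convergence/summability on the event of interest.

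Next I would define the candidate supermartingale
\begin{equation*}
w_k \;=\; \frac{v_k}{P_k} \;+\; \sum_{i=0}^{k-1}\frac{u_i}{P_{i+1}},
\end{equation*}
and check the key computation
\begin{equation*}
E[w_{k+1}\mid\mathcal{F}_k] \;\le\; \frac{(1+\alpha_k)v_k-u_k}{P_{k+1}} + \sum_{i=0}^{k}\frac{u_i}{P_{i+1}} \;=\; \frac{v_k}{P_k} + \sum_{i=0}^{k-1}\frac{u_i}{P_{i+1}} \;=\; w_k.
\end{equation*}
Since $w_k\ge 0$, Doob's martingale convergence theorem gives $w_k\to w_\infty$ a.s.\ with $w_\infty$ finite.

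Now I would split this limit into the two claimed conclusions. The partial sums $S_k=\sum_{i=0}^{k-1}u_i/P_{i+1}$ are monotonically nondecreasing and bounded above by $w_k$ (up to the $v_k/P_k$ term, which is $\ge 0$); combined with the almost-sure convergence of $w_k$ this forces $S_\infty<\infty$ a.s., and then $v_k/P_k=w_k-S_k$ converges a.s.\ as the difference of two convergent sequences. Finally, because $P_k$ is a.s.\ bounded between $1$ and $P_\infty<\infty$, convergence of $v_k/P_k$ transfers to convergence of $v_k$, and summability of $u_i/P_{i+1}$ transfers to $\sum_k u_k<\infty$ a.s.

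The main obstacle is really the first step, guessing the right auxiliary process $w_k$; once the rescaling by $P_k$ and the correction term $\sum u_i/P_{i+1}$ are in hand, the supermartingale identity is just algebra and the rest is unpacking. A technical point that needs care is that the manipulations above (rescaling, the bound $P_k\le P_\infty$) are pathwise on the a.s.\ event $\{\sum_k\alpha_k<\infty\}$; the integrability needed to apply the conditional expectation linearity and Doob's theorem follows automatically from the nonnegativity of $v_k,u_k,\alpha_k$ together with an induction showing $E[v_k]\le E[v_0]\prod_{i=0}^{k-1}(1+\alpha_i)<\infty$ in the case where the $\alpha_k$ are deterministic, and extends by a standard localization argument otherwise.
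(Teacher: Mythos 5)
Your proof is correct, but note that the paper does not prove this lemma at all: it is quoted verbatim as the Robbins--Siegmund supermartingale convergence theorem with a citation to \cite[Lemma 11, p.~50]{Pol:87}, so there is no in-paper argument to compare against. What you have written is precisely the classical proof of that result: rescale by $P_k=\prod_{i=0}^{k-1}(1+\alpha_i)$ (which is $\mathcal{F}_k$-measurable up to index $k+1$ because $\alpha_0,\dots,\alpha_k\in\mathcal{F}_k$, a point worth stating explicitly since otherwise $P_{k+1}$ could not be pulled out of the conditional expectation), add the compensator $\sum_{i<k}u_i/P_{i+1}$ to absorb the drift, apply nonnegative supermartingale convergence, and undo the rescaling using $1\le P_k\le P_\infty<\infty$ on the event $\{\sum_k\alpha_k<\infty\}$. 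The algebra in your display is right, and the only genuinely delicate issue --- that the lemma as stated does not assume $E[v_0]<\infty$, so Doob's theorem cannot be invoked directly without either working with generalized conditional expectations for nonnegative variables or localizing on $\{v_0\le M\}$ --- is one you flag, though the closing remark about the deterministic-$\alpha_k$ induction ``extending by a standard localization argument'' is the one place where a referee would ask you to spell out the details rather than gesture at them.
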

In the next lemma we prove that Algorithm (1-RCD) is a descent
method, i.e. the objective function is nonincreasing along its
iterations:
\begin{lemma}
\label{lemadec} Let  $x^k$  be the sequence generated by Algorithm
(1-RCD)  under Assumption \ref{mainassumption}. Then, the following
relation holds:
\begin{align}
\label{decr} F(x^{k+1}) \le F(x^k) -\frac{L_{i_k}}{2}
\norm{d_{i_k}}^2 \quad \forall k \geq 0.
\end{align}
\end{lemma}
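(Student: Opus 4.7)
The plan is to combine two ingredients already laid out in the excerpt: the quadratic upper bound \eqref{objquad} for block descent, and the strong convexity of the block quadratic model whose minimizer defines $d_{i_k}$.

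First, I would apply \eqref{objquad} at the current iterate $x^k$ with block index $i = i_k$ and step $s_{i_k} = d_{i_k}$. Since $x^{k+1} = x^k + U_{i_k} d_{i_k}$, this yields directly
\begin{equation*}
F(x^{k+1}) \le f(x^k) + \langle \nabla_{i_k} f(x^k), d_{i_k}\rangle + \frac{L_{i_k}}{2} \norm{d_{i_k}}^2 + h(x^k + U_{i_k} d_{i_k}).
\end{equation*}
The right-hand side is exactly the value at $s_{i_k} = d_{i_k}$ of the block quadratic model $q_{i_k}(s) := f(x^k) + \langle \nabla_{i_k} f(x^k), s\rangle + \tfrac{L_{i_k}}{2}\norm{s}^2 + h(x^k + U_{i_k} s)$ that defines $d_{i_k}$ in \eqref{extdir}.

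Next, I would exploit the fact that $q_{i_k}$ is $L_{i_k}$-strongly convex in $s$ with respect to the Euclidean norm on $\rset^{n_{i_k}}$ (the quadratic term is $\tfrac{L_{i_k}}{2}\norm{s}^2$ and the remaining pieces are convex by Assumption \ref{mainassumption}(ii)). Since $d_{i_k}$ is the minimizer, the first-order optimality condition $0 \in \partial q_{i_k}(d_{i_k})$ together with strong convexity gives
\begin{equation*}
q_{i_k}(0) \ge q_{i_k}(d_{i_k}) + \frac{L_{i_k}}{2}\norm{d_{i_k}}^2,
\end{equation*}
which is the block analogue of the inequality \eqref{strongconvex} already displayed for the full-variable model. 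Observing that $q_{i_k}(0) = f(x^k) + h(x^k) = F(x^k)$ (using block separability of $h$ to replace $h(x^k + U_{i_k} \cdot 0)$ by $h(x^k)$), this gives $q_{i_k}(d_{i_k}) \le F(x^k) - \tfrac{L_{i_k}}{2}\norm{d_{i_k}}^2$.

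Chaining the two bounds yields $F(x^{k+1}) \le q_{i_k}(d_{i_k}) \le F(x^k) - \tfrac{L_{i_k}}{2}\norm{d_{i_k}}^2$, which is \eqref{decr}. There is no real obstacle here; the only point that deserves a line of care is recording that $q_{i_k}(0) = F(x^k)$ and that the strong convexity modulus carries through from the definition of $q_{i_k}$, so the displayed bound uses $L_{i_k}$ rather than any other constant.
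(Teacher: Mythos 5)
Your argument is correct and is essentially the paper's own proof in a slightly repackaged form: the paper also starts from \eqref{objquad} evaluated at $s_{i_k}=d_{i_k}$, and then obtains the $-\tfrac{L_{i_k}}{2}\norm{d_{i_k}}^2$ term by writing out the first-order optimality condition for \eqref{extdir} and the convexity of $h_{i_k}$ explicitly, which is precisely the derivation of the strong-convexity inequality $q_{i_k}(0)\ge q_{i_k}(d_{i_k})+\tfrac{L_{i_k}}{2}\norm{d_{i_k}}^2$ that you invoke directly. No gap; the two proofs coincide in substance.
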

\begin{proof}:
From the optimality conditions of subproblem \eqref{extdir} we have
that there exists a subgradient $g(x^k_{i_k} + d_{i_k}) \in
\partial h_{i_k}(x^k_{i_k} + d_{i_k})$ such that:
\[  \nabla_{i_k} f (x^k) + L_{i_k}d_{i_k} + g(x^k_{i_k} + d_{i_k}) =0.  \]
On the other hand, since  the function $h_{i_k}$ is convex,
according to Assumption \ref{mainassumption} (ii), the following
inequality holds:
\[ h_{i_k}(x^k_{i_k} + d_{i_k}) - h_{i_k}(x^k_{i_k}) \leq
 \langle g(x^k_{i_k} + d_{i_k}), d_{i_k} \rangle \]

Applying the previous two relations in  \eqref{objquad} and using
the separability of the function $h$, then under Assumption
\ref{mainassumption} (ii) we have that
\begin{align*}
 F(x^{k+1}) & \le  F(x^k) + \langle \nabla_{i_k} f(x^k),d_{i_k} \rangle +
 \frac{L_{i_k}}{2} \norm{d_{i_k}}^2
 +h_{i_k}(x^k_{i_k} + d_{i_k}) - h_{i_k}(x^k_{i_k}) \\
 &\le F(x^k) +  \langle \nabla_{i_k} f(x^k),d_{i_k} \rangle +
 \frac{L_{i_k}}{2} \norm{d_{i_k}}^2 + \langle g(x^k_{i_k} + d_{i_k}), d_{i_k} \rangle
 \\
 &\le F(x^k)- \frac{L_{i_k}}{2}
\norm{d_{i_k}}^2.
\end{align*}
\qed
\end{proof}

Using Lemma \ref{lemadec}, we state the following result regarding
the asymptotic convergence  of Algorithm (1-RCD).

\begin{theorem}
\label{thconv}
 If Assumption \ref{mainassumption} holds for the composite objective function $F$
 of problem \eqref{model} and the sequence $x^k$ is generated by Algorithm (1-RCD)
 using the uniform distribution, then the following statements are valid:
\begin{enumerate}
\item[(i)] The sequence of random variables $M_1(x^k,L)$ converges  to 0 a.s.
and the sequence $F(x^k)$ converges to a random variable $\bar F$ a.s.
\item[(ii)] Any accumulation point of the sequence $x^k$ is a stationary
point for optimization problem~\eqref{model}.
\end{enumerate}
\end{theorem}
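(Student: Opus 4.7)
The plan is to combine the descent inequality of Lemma \ref{lemadec} with the supermartingale argument of Lemma \ref{mart}. The first step is a structural observation: because $h$ is block-separable and the norm $\norm{\cdot}_L$ is block-diagonal, the function $\psi_L(s;x)$ decouples across the block variables $s_i$, so the full minimizer in \eqref{dirtotal} satisfies $U_i^T d_L(x) = d_i(x)$, where $d_i(x)$ is the per-block minimizer defined by the same formula as \eqref{extdir}. Consequently
\[
M_1(x,L)^2 \;=\; \bigl(\norm{D_L d_L(x)}_L^*\bigr)^2 \;=\; \sum_{i=1}^{N} L_i^{-1}\norm{L_i\, d_i(x)}^2 \;=\; \sum_{i=1}^{N} L_i \norm{d_i(x)}^2,
\]
and taking conditional expectation in Lemma \ref{lemadec} with respect to $\xi^{k-1}$ under the uniform distribution $p_i = 1/N$ yields
\[
E[F(x^{k+1})\mid \xi^{k-1}] \;\le\; F(x^k) - \frac{1}{2N}\sum_{i=1}^{N} L_i \norm{d_i(x^k)}^2 \;=\; F(x^k) - \frac{1}{2N}\, M_1(x^k,L)^2.
\]

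Next I would apply Lemma \ref{mart} to the nonnegative sequence $v_k = F(x^k)-F^*$ (recall $F^*=\min F$) with $u_k = \tfrac{1}{2N}M_1(x^k,L)^2$ and $\alpha_k = 0$. The lemma then gives that $F(x^k)$ converges almost surely to some random variable $\bar F$ and that $\sum_{k\ge 0} M_1(x^k,L)^2 < \infty$ almost surely; in particular $M_1(x^k,L) \to 0$ almost surely, which is exactly part~(i).

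For part~(ii), let $\bar x$ be an accumulation point along a subsequence $x^{k_j}\to\bar x$, on the full-measure event where the above convergences hold. Because $\psi_L(\cdot;x)$ is $1$-strongly convex with respect to $\norm{\cdot}_L$ uniformly in $x$ (cf.\ \eqref{strongconvex}) and its data depends continuously on $x$, a standard parametric minimization argument shows that $d_L(\cdot)$ is continuous, hence so is $M_1(\cdot,L)$. Passing to the limit along the subsequence therefore gives $M_1(\bar x,L)=0$, and Lemma \ref{lemmaconv} identifies $\bar x$ as a stationary point of \eqref{model}.

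The main technical point I expect to require care is the continuity of $d_L(\cdot)$ when $h$ is extended-real-valued (for example the box indicator \eqref{boxfct}): one must restrict attention to $\mathrm{dom}(h)$ and combine the uniform $1$-strong convexity of $\psi_L(\cdot;x)$ with its continuous dependence on $x$ to control the argmin under perturbations. Once this continuity is in hand, both statements follow directly from the descent inequality together with Lemmas \ref{mart} and \ref{lemmaconv}.
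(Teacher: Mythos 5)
Your proof is correct, and for part (i) it follows essentially the paper's route: the same conditional-expectation step applied to Lemma \ref{lemadec}, using block separability to identify $\tfrac{1}{N}\sum_i L_i\norm{d_i(x^k)}^2$ with $\tfrac{1}{N}\norm{d_L(x^k)}_L^2 = \tfrac{1}{N}M_1(x^k,L)^2$, followed by Lemma \ref{mart}. In fact your invocation of the second conclusion of Lemma \ref{mart} ($\sum_k u_k<\infty$ a.s., hence $M_1(x^k,L)\to 0$ a.s.) is cleaner than the paper's, which only uses the convergence of $v_k$, then deduces $\norm{d_{i_k}}\to 0$ a.s.\ from the telescoping descent and passes from this to $E[\norm{d_{i_k}}\mid\xi^{k-1}]\to 0$ a.s.\ --- a step that strictly speaking needs an extra justification your argument avoids entirely.

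For part (ii) you take a genuinely different route. The paper does not prove continuity of $d_L(\cdot)$; it fixes an arbitrary $s$, writes the minimizer inequality $\psi_L(d_L(x^k);x^k)\le\psi_L(s;x^k)$, and lets $k\to\infty$ using only $d_L(x^k)\to 0$ together with continuity of $f$, $\nabla f$ and $h$, concluding $\psi_L(0;\bar x)\le\psi_L(s;\bar x)$ for all $s$, i.e.\ $d_L(\bar x)=0$. This pointwise-in-$s$ limit is more elementary and sidesteps the parametric-minimization issue you flag. Your route instead establishes continuity of the argmin map and of $M_1(\cdot,L)$; this is indeed standard and provable here --- most directly by observing that $d_L(x)=\mathrm{prox}^{D_L}_h\bigl(x-D_L^{-1}\nabla f(x)\bigr)-x$, with the prox nonexpansive in $\norm{\cdot}_L$ and $x\mapsto x-D_L^{-1}\nabla f(x)$ continuous by Assumption \ref{mainassumption}(i) --- and it yields the slightly stronger byproduct that $M_1(\cdot,L)$ is a continuous optimality measure, at the cost of the extra lemma you would need to write out. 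Both arguments are valid under the stated assumptions; note also that the paper's version of (ii) assumes for brevity that the whole sequence converges, whereas you correctly work along an arbitrary convergent subsequence.
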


\begin{proof}
(i) From Lemma \ref{lemadec} we get:
\begin{equation*}
F(x^{k+1}) - F^* \le F(x^k) - F^*  -
\frac{L_{i_k}}{2}\norm{d_{i_k}}^2 \quad \forall k \geq 0.
\end{equation*}
We now take the expectation  conditioned on $\xi^{k-1}$ and note
that $i_k$ is independent on the past $\xi^{k-1}$, while $x^k$ is
fully determined by $\xi^{k-1}$. We thus obtain:
\begin{align*}
E \left[ F(x^{k+1}) - F^*| \; \xi^{k-1} \right]  & \le F(x^ k) - F^*
- \frac{1}{2} E\left[L_{i_k} \cdot \norm{d_{i_k}}^2 | \;  \xi^{k-1} \right] \\
&\le  F(x^k) - F^*  - \frac{1}{2N}\norm{d_L(x^k)}_L^2.
\end{align*}
Using the supermartingale convergence theorem given in Lemma
\ref{mart} in the previous inequality, we can ensure that \[ \lim_{k
\to \infty} F(x^k) - F^* = \theta \quad \text{a.s.}\] for a random
variable $\theta \geq 0$ and thus $\bar F=\theta + F^*$. Further,
due to almost sure convergence of sequence $F(x^k)$, it can be
easily seen that $\lim\limits_{k \to \infty} F(x^k) - F(x^{k+1})=0$
a.s. From $x^{k+1} - x^k = U_{i_k}d_{i_k}$ and  Lemma \ref{lemadec}
we have:
\begin{equation*}
\frac{L_{i_k}}{2}\norm{ d_{i_k} }^2=
\frac{L_{i_k}}{2}\norm{x^{k+1}-x^k}^2 \le F(x^k) - F(x^{k+1})  \quad
\forall k \geq 0,
\end{equation*}
which implies  that \[ \lim\limits_{k \to \infty}
\norm{x^{k+1}-x^k}=0 \quad \text{and} \quad  \lim\limits_{k \to
\infty} \norm{d_{i_k}} = 0 \quad \text{a.s.}
\]  As $\norm{d_{i_k}} \to 0$ a.s., we can conclude that the random
variable $E[\norm{d_{i_k}}| \xi^{k-1}] \to 0$ a.s. or equivalently
$M_1(x^k,L) \to 0$ a.s.

(ii) For brevity we assume that the entire sequence $x^k$ generated
by Algorithm (1-RCD) is convergent. Let $\bar{x}$  be the limit
point of the sequence $x^k$. In the first part of the theorem we
proved that the sequence of random variables $d_L(x^k)$ converges to
$0$ a.s. Using the definition of $d_L(x^k)$ we have:
\begin{align*}
f(x^k)& +\langle \nabla f(x^k), d_L(x^k)\rangle +
\frac{1}{2}\norm{d_L(x^k)}_L^2 + h(x^k +d_L(x^k))\\
& \le  f(x^k) +\langle \nabla f(x^k), s\rangle +
\frac{1}{2}\norm{s}_L^2 + h(x^k+s) \quad \forall s \in \rset^n,
\end{align*}
and  taking the limit $k \to \infty$ and using Assumption
\ref{mainassumption} (ii) we get:
\begin{equation*}
F(\bar{x})\le f(\bar{x}) +\langle \nabla f(\bar{x}), s\rangle +
\frac{1}{2}\norm{s}_L^2 + h(\bar{x}+s) \quad \forall s \in \rset^n.
\end{equation*}
This shows that $d_L(\bar{x}) = 0$ is the minimum in subproblem
\eqref{dirtotal} for $x = \bar{x}$ and  thus $M_1(\bar{x},L) = 0$.
From  Lemma \ref{lemmaconv} we conclude that  $\bar{x}$ is a
stationary point for optimization problem \eqref{model}.  \qed
\end{proof}

\noindent The next theorem proves the  convergence rate of the
optimality measure $M_1(x^k,L)$ towards $0$ in expectation.
\begin{theorem}
\label{thcr} Let $F$ satisfy Assumption \ref{mainassumption}. Then,
the Algorithm (1-RCD) based on the uniform distribution generates a
sequence $x^k$ satisfying the following convergence rate for the
expected values of the optimality measure:
\begin{equation*}
\min\limits_{0 \le l \le k} E \left[ \left( M_1(x^l,L) \right)^2
\right] \le \frac{2N\left(F(x^0) - F^*\right)}{k+1} \qquad \forall k
\geq 0.
\end{equation*}
\end{theorem}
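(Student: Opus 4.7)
The plan is to combine the per-iteration descent estimate from Lemma \ref{lemadec} with a conditional expectation computation that connects the block direction $d_{i_k}$ to the global direction $d_L(x^k)$, then telescope.

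First I would start from Lemma \ref{lemadec}, subtract $F^*$ from both sides and take the expectation conditioned on $\xi^{k-1}$, noting that $x^k$ is $\xi^{k-1}$-measurable while $i_k$ is independent of $\xi^{k-1}$ and uniformly distributed on $\{1,\dots,N\}$. This gives
\begin{equation*}
 E[F(x^{k+1}) - F^*\mid \xi^{k-1}] \le F(x^k) - F^* - \frac{1}{2N} \sum_{i=1}^N L_i \,\|d_i(x^k)\|^2,
\end{equation*}
where $d_i(x^k)$ denotes the block direction defined in \eqref{extdir} at the point $x^k$ for block $i$.

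The key identification is that, thanks to the separability of $h$ and the block diagonal structure of $D_L$, the minimization problem \eqref{dirtotal} defining $d_L(x^k)$ decouples across blocks, so $U_i^T d_L(x^k) = d_i(x^k)$ for every $i$. Consequently
\begin{equation*}
 \sum_{i=1}^N L_i\,\|d_i(x^k)\|^2 \;=\; \|d_L(x^k)\|_L^2 \;=\; \bigl(\|D_L d_L(x^k)\|_L^*\bigr)^2 \;=\; \bigl(M_1(x^k,L)\bigr)^2,
\end{equation*}
where the middle equality is a direct calculation with the dual norm $\|y\|_L^* = (\sum_i L_i^{-1}\|y_i\|^2)^{1/2}$ applied to $y = D_L d_L(x^k)$. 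This identity is the one step that needs care; everything else is routine.

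Taking the full expectation and writing $\phi^k = E[F(x^k)]$ yields
\begin{equation*}
 \phi^{k+1} - F^* \;\le\; \phi^k - F^* - \frac{1}{2N}\, E\bigl[(M_1(x^k,L))^2\bigr].
\end{equation*}
Summing this inequality for $l = 0,\dots,k$ telescopes the $\phi$-terms, and using $\phi^{k+1} \ge F^*$ gives
\begin{equation*}
 \sum_{l=0}^{k} E\bigl[(M_1(x^l,L))^2\bigr] \;\le\; 2N\bigl(F(x^0) - F^*\bigr).
\end{equation*}
Finally, bounding the sum from below by $(k+1)$ times the minimum term produces the claimed rate
\begin{equation*}
 \min_{0\le l\le k} E\bigl[(M_1(x^l,L))^2\bigr] \le \frac{2N(F(x^0)-F^*)}{k+1}.
\end{equation*}
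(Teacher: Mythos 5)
Your proof is correct. The two identities you single out as the delicate step both check out: under Assumption \ref{mainassumption}(ii) the function $\psi_L(\cdot\,;x)$ is fully separable across blocks, so the minimization \eqref{dirtotal} decouples and $U_i^T d_L(x^k)=d_i(x^k)$ for every $i$, and a direct computation gives $\norm{D_L d_L(x)}_L^*=\norm{d_L(x)}_L$; hence your conditional-expectation step yields exactly the per-iteration decrease $\frac{1}{2N}E[(M_1(x^k,L))^2]$ and the telescoping produces the stated constant. Your route differs from the paper's in how this per-iteration decrease is obtained: you telescope Lemma \ref{lemadec} and invoke the block decoupling, whereas the paper takes the conditional expectation of the quadratic model \eqref{objquad}, arrives at $E[F(x^+)\mid\xi]\le(1-\tfrac{1}{N})F(x)+\tfrac{1}{N}\psi_L(d_L(x);x)$, and then lower-bounds $\psi_L(0;x)-\psi_L(d_L(x);x)$ by $\tfrac{1}{2}\norm{d_L(x)}_L^2$ using the $1$-strong convexity inequality \eqref{strongconvex}. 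Your argument is shorter and more elementary, and indeed the paper itself uses essentially your decoupling inequality in the proof of Theorem \ref{thconv}(i). What the paper's strong-convexity route buys is robustness: it does not rely on the subproblem separating across blocks, which is precisely what fails for the linearly constrained Algorithm (2-RCD), where the constraint $a^Ts=0$ couples the blocks of \eqref{dirtotal2} and the analogue of your identity is unavailable, so Theorem \ref{constrainedrate} must go through $\psi_{N\Gamma}$, conformal realizations and strong convexity.
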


\begin{proof}: For simplicity of the exposition we use the following notation:
given the current iterate $x$, denote the next iterate $x^+ = x + U_i d_i$,
where direction $d_i$ is given by \eqref{extdir} for some
random chosen index $i$ w.r.t. uniform distribution. For brevity, we
 also adapt  the notation of expectation upon the entire history, i.e.
$(\phi, \phi^+, \xi)$ instead of $(\phi^k, \phi^{k+1}, \xi^{k-1})$.
From  Assumption \ref{mainassumption} and inequality \eqref{objquad}
we have:
\begin{align*}
F(x^+) & \le  f(x) + \langle \nabla_i f(x), d_i \rangle +
\frac{L_i}{2}\norm{d_i}^2 + h_i(x_i + d_i) + \sum\limits_{j\neq i} h_j(x_j).
\end{align*}
We now take the expectation  conditioned on $\xi$:
\begin{align*}
 &E [F(x^+)| \ \xi ]  \le\!  E \Big[ f(x) \!+\! \langle \nabla_i f(x), d_i
\rangle \!+\! \frac{L_i}{2} \norm{d_i}^2 + h_i(x_i + d_i) \!+\!
\sum\limits_{j \neq i} h_j(x_j) | \; \xi
 \Big]\\
& \le f(x)  + \frac{1}{N} \Big[\langle \nabla f(x), d_{L}(x)\rangle
 + \frac{1}{2} \norm{d_{L}(x)}_L^2  + h(x+d_{L}(x)) + (N-1)h(x) \Big].
\end{align*}
After rearranging the above expression we get:
\begin{equation}\label{compact}
 E[ F(x^+)| \; \xi ] \le \left(1-\frac{1}{N}\right)F(x)+\frac{1}{N}\psi_L(d_L(x);x).
\end{equation}
Now, by taking the expectation in \eqref{compact} w.r.t. $\xi$ we
obtain:
\begin{equation}\label{explips}
 \phi^+ \le \left(1-\frac{1}{N}\right)\phi + E \left[ \frac{1}{N} \psi_{L}(d_L(x);x) \right],
\end{equation}
and then using the $1-$strong convexity property  of $\psi_{L}$ we
get:
\begin{align}
\phi - \phi^+ & \ge \phi - \left(1-\frac{1}{N}\right)\phi - \frac{1}{N}
E\left[\psi_L(d_L(x);x) \right]\nonumber \\
& = \frac{1}{N} \left( E \left[ \psi_L(0;x)]  - E [\psi_L(d_L(x);x) \right] \right)\nonumber \\
& \ge \frac{1}{2N} E \left[ \norm{d_L(x)}_L^2 \right]= \frac{1}{2N}
E \left[ (M_1(x,L))^2 \right]. \label{descent}
\end{align}

\noindent Now coming back to the notation  dependent on  $k$ and
summing w.r.t. the entire history we have:
\begin{equation*}
  \frac{1}{2N} \sum\limits_{l=0}^k E \left[ (M_1(x^l,L))^2 \right] \le \phi^0 -
  F^*,
\end{equation*}
which leads to the statement of the theorem. \qed
\end{proof}

It is important to note that the convergence rate for the Algorithm
(1-RCD) given in Theorem \ref{thcr} is typical for the class of
first order methods designed for solving nonconvex and nonsmooth
optimization problems (see e.g. \cite{Nes:07} for more details).
Recently, after our paper came under review, a variant of 1-random
coordinate descent method for solving composite nonconvex problems
was also proposed in  \cite{LuXia:13}. However, the authors in
\cite{LuXia:13} do not provide complexity results for their
algorithm, but only asymptotic convergence in expectation. Note also
that our convergence results are different from the convex case
\cite{Nes:10,RicTac:11}, since here we introduce another optimality
measure and we use the supermartingale convergence theorem in the
analysis.

Furthermore, when the objective function $F$ is smooth and
nonconvex, i.e. $h=0$, the first order necessary conditions of
optimality become $\nabla f(x^*) =0$. Also, note that in this case,
the optimality measure $M_1(x,L)$ is given by: $M_1(x,L) =
\norm{\nabla f(x)}^*_L$. An immediate consequence of Theorem
\ref{thcr} in this case is the following result:
\begin{corrolary}
Let $h = 0$ and $f$ satisfy Assumption \ref{mainassumption} (i).
Then, in this case, the Algorithm (1-RCD) based on the uniform
distribution generates a sequence $x^k$ satisfying the following
convergence rate for the expected values of the norm of the
gradients:
\begin{equation*}
\min\limits_{0 \le l \le k} E \left[ \left(\norm{\nabla f(x^l)}_L^*
\right)^2 \right] \le  \frac{2N \left( F(x^0) - F^* \right)}{k+1}
\quad \forall k \geq 0.
\end{equation*}
\end{corrolary}

%%%%%%%%%%%%%%%%%%%%%%%%%%%%%%%%%%%%%%%%%%%%%%%%%
%%%%%%%%%%%%%%%%%%%%%%%%%%%%%%%%%%%%%%%%%%%%%%%%%%%%

\section{Constrained minimization of composite objective functions}

In this section we present a variant of random block coordinate
gradient descent method for solving large-scale nonconvex
optimization problems with composite objective function and a single
linear equality constraint:
\begin{align}\label{modelcon}
 F^* = \min\limits_{x \in \rset^n} & \ F(x) \ \ \left(:=f(x)+h(x)\right)  \\
 \text{s.t.:} \;\;  & a^T x = b,    \nonumber
\end{align}
where $a \in \rset^n$ is a nonzero vector and functions $f$ and $h$
satisfy similar conditions as in Assumption \ref{mainassumption}. In
particular, the smooth and nonsmooth part of the objective function
in \eqref{modelcon} satisfy:
\begin{assumption}
\label{mainassumption2}
\begin{enumerate}
\item[(i)]
The function $f$ has 2-block coordinate Lipschitz continuous
gradient, i.e. there are constants $L_{ij}>0$ such that:
\begin{equation*}
 \norm{\nabla_{ij} f(x+U_i s_{i} + U_j s_j) - \nabla_{ij}f(x)}  \le L_{ij}
 \norm{s_{ij}}
\end{equation*}
for all $s_{ij} =[s_i^T \; s_j^T]^T \in \rset^{n_i+n_j}$,
 $ x \in \rset^n$ and $i, j =1,\dots,N$.
\item[(ii)]
The function $h$ is  proper, convex, continuous and coordinatewise
separable:
\begin{equation*}
h(x) = \sum\limits_{i=1}^n h_i(x_i)  \quad  \forall x \in \rset^n,
\end{equation*}
where the functions $h_i:\rset^{} \to \rset^{}$ are convex for all
$i=1,\dots,n$.
\end{enumerate}
\end{assumption}
Note that these assumptions are frequently used in the area of
coordinate descent methods for convex minimization, e.g.
\cite{Bec:12,Nec:13,NecPat:12,NecNes:12,TseYun:09}. Based on this
assumption the first order necessary optimality conditions become:
if $x^*$ is a local minimum of \eqref{modelcon}, then  there exists
a scalar $\lambda^*$ such that:
\[
0 \in \nabla f(x^*) + \partial h(x^*)+ \lambda^* a \quad \text{and}
\quad a^T x^*=b. \] Any vector $x^*$ satisfying this relation is
called a \textit{stationary point} for nonconvex problem
\eqref{modelcon}. For a simpler exposition in the following sections
we use a context-dependent notation as follows: let  $x =
\sum_{i=1}^N U_i x_i \in \rset^n$ and  $x_{ij} =[x_i^T \; x_j^T]^T
\in \rset^{n_i + n_j}$, then  by addition with a vector in the
extended space $y\in \rset^n$, i.e.,  $y+x_{ij}$, we understand $y+
U_ix_i+U_jx_j$. Also, by the inner product $\langle y,
x_{ij}\rangle$ we understand $\langle y, x_{ij}\rangle = \langle
y_i, x_i\rangle  + \langle y_j, x_j\rangle$. Based on Assumption
\ref{mainassumption2} (i)  the following inequality holds
\cite{NecPat:12}:
\begin{equation}\label{lipsij}
 |f(x+s_{ij}) -  f(x) + \langle \nabla_{ij} f(x), s_{ij} \rangle | \leq  \frac{L_{ij}}{2}
 \norm{s_{ij}}^2 \quad \forall x \in \rset^n, \; s_{ij} \in \rset^{n_i + n_j}
\end{equation}
and then we can bound the function $F$ with the following quadratic
expression:
\begin{equation}
\label{objquadij}
 F(x+s_{ij}) \le f(x) + \langle \nabla_{ij} f(x),s_{ij} \rangle +
 \frac{L_{ij}}{2} \norm{s_{ij}}^2
 +h(x+s_{ij}) \;\; \forall s_{ij} \in \rset^{n_i+n_j}, x \in \rset^n.
\end{equation}

\noindent Given  local Lipschitz constants $L_{ij}>0$ for $i\neq j
\in \{1, \dots, N\}$, we define the vector $\Gamma \in \rset^N$ with the
components   $\Gamma_i = \frac{1}{N}\sum\limits_{j=1}^N L_{ij}$, the
diagonal matrix $D_{\Gamma} = \text{diag}(\Gamma_1 I_{n_1}, \dots, \Gamma_N
I_{n_N}) \in \rset^{n \times n}$ and the following pair of dual
norms:
\begin{equation*}
 \norm{x}_\Gamma = \left( \sum_{i=1}^N \Gamma_i \norm{x_i}^2 \right)^{1/2}  \; \forall x \in \rset^n,
  \quad \norm{y}_\Gamma^* = \left( \sum_{i=1}^N \Gamma_i^{-1} \norm{y_i}^2 \right)^{1/2}
  \;
\forall y \in \rset^n.
\end{equation*}

%%%%%%%%%%%%%%%%%%%%%%%%%%%%%%%%%%%%%%%%%%%%%%%%%%%

\subsection{A 2-random coordinate descent algorithm}
\label{2-randcoordmet}

Let $(i,j)$ be a two dimensional random variable, where $i, j \in
\{1, \dots, N\}$ with $ i \neq j$ and
$p_{i_kj_k}=\text{Pr}((i,j)=(i_k,j_k))$ be its probability
distribution. Given a feasible $x$, two blocks are chosen randomly
with respect to a given  probability distribution $p_{ij}$ and the
quadratic model \eqref{objquadij}  is minimized with respect to
these coordinates. Our method has the following iteration: given a
feasible initial point $x^0$, that is $a^Tx^0=b$, then for all $k
\geq 0$
\begin{equation*}\label{algorithm}
\boxed{
\begin{split}
& \textbf{Algorithm (2-RCD)}\\
&1. \ \text{Choose randomly 2 block coordinates} \;
(i_k,j_k) \;   \text{with probability} \;  p_{i_kj_k} \\
 &2.\ \text{Set} \ x^{k+1} = x^k + U_{i_k} d_{i_k} + U_{j_k} d_{j_k},
 \end{split}
 }
 \end{equation*}
where directions $d_{i_kj_k} =[d_{i_k}^T \; d_{j_k}^T]^T$
minimize the quadratic model \eqref{objquadij}:
\begin{align}
d_{i_kj_k} = \arg &\min_{s_{i_kj_k}} f(x^k) + \langle
\nabla_{i_kj_k} f (x^k), s_{i_kj_k} \rangle + \frac{L_{i_kj_k}}{2}
\norm{s_{i_kj_k}}^2 +  h(x^k +s_{i_k j_k}) \nonumber \\
\text{s.t.:} & \quad a_{i_k}^T s_{i_k} + a_{j_k}^T s_{j_k}=0.
\label{dir2}
\end{align}
The reader should note  that for problems with simple separable
functions $h$ (e.g. box indicator function \eqref{boxfct},
$\ell_1$-regularization function \eqref{regfct}) the arithmetic
complexity of computing  the direction $d_{ij}$ is
$\mathcal{O}(n_i+n_j)$ (see \cite{NecPat:12,TseYun:09} for a
detailed discussion). Moreover, in the scalar case, i.e. when $N=n$,
the search direction $d_{ij}$ can be computed in closed form,
provided that $h$ is simple (e.g. box indicator function or
$\ell_1$-regularization function) \cite{NecPat:12}.  Note that other
(coordinate descent) methods designed for solving nonconvex problems
subject to a single linear equality constraint have complexity per
iteration at least of order $\mathcal{O}(n)$
\cite{Bec:12,LinLuc:09,ThiMoe:10,TseYun:09}. We can consider more
than one equality constraint in the optimization model
\eqref{modelcon}. However, in this case  the analysis of  Algorithm
(2-RCD) is involved and the complexity per iteration is much higher
(see \cite{NecPat:12,TseYun:09} for a detailed discussion).

We assume that  for every pair $(i,j)$ we have $p_{ij}=p_{ji}$ and
$p_{ii}=0$, resulting in $\frac{N(N-1)}{2}$ different pairs $(i,j)$.
We define the subspace $S= \{s \in \rset^n: \; a^T s = 0\}$  and the
local subspace w.r.t. the pair $(i,j)$ as $S_{ij} = \{x \in S: \; \;
x_l = 0 \;\; \forall l \neq i,j \}$.  Also, we denote $ \xi^k = \{
(i_0,j_0), \dots, (i_{k},j_{k})\} $ and  $ \phi^k = E \left[ F(x^k)
\right] $ for the expectation taken w.r.t. $\xi^{k-1}$. Given a
constant $\alpha>0$ and a vector with positive entries $L \in
\rset^N$, the following property is valid for $\psi_L$:
\begin{equation}\label{psialpha}
\psi_{\alpha L}(s;x) = f(x) + \langle \nabla f(x), s\rangle +
\frac{\alpha}{2}\norm{s}_L^2 + h(x+s).
\end{equation}
Since in this section we deal with linearly constrained problems, we
need to  adapt the definition for the map $d_L(x)$ introduced in
Section 2. Thus, for any vector with positive entries $L \in
\rset^N$ and $x \in \rset^n$, we define the following map:
\begin{equation}\label{dirtotal2}
d_L(x) = \arg\min\limits_{s \in S} f(x) + \langle \nabla f(x),
s\rangle + \frac{1}{2}\norm{s}_L^2 + h(x+s).
\end{equation}
In order to analyze the  convergence of Algorithm (2-RCD), we
introduce an \emph{optimality measure}:
\begin{equation*}
     M_2(x,\Gamma) = \norm{D_\Gamma \cdot d_{N\Gamma}(x)}^*_\Gamma.
\end{equation*}
\begin{lemma}

\label{lemmaconv2} For any given vector $\tilde \Gamma$ with positive
entries, a vector $x^* \in \rset^n$ is a stationary point for
problem \eqref{modelcon} if and only if the quantity $M_2(x^*,\tilde
\Gamma)=0$.
\end{lemma}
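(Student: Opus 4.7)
The plan is to adapt the argument of Lemma \ref{lemmaconv} to the constrained setting. Because $D_{\tilde\Gamma}$ has strictly positive diagonal entries, the identity $\|D_{\tilde\Gamma} d\|_{\tilde\Gamma}^* = \|d\|_{\tilde\Gamma}$ shows that $M_2(x^*,\tilde\Gamma) = 0$ is equivalent to $d_{N\tilde\Gamma}(x^*) = 0$. The proof thus reduces to relating the (unique, by strong convexity of the objective in \eqref{dirtotal2}) constrained search direction $d_{N\tilde\Gamma}(x^*)$ to a KKT-type stationarity condition for \eqref{modelcon}. The only substantive difference from the unconstrained case treated in Lemma \ref{lemmaconv} is that \eqref{dirtotal2} carries the linear constraint $a^Ts = 0$, so its first-order optimality conditions feature an additional scalar Lagrange multiplier.

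For the ``if'' direction, I would assume $d_{N\tilde\Gamma}(x^*) = 0$, so that $s = 0$ minimizes \eqref{dirtotal2} at $x = x^*$. The KKT conditions of this convex constrained subproblem at $s = 0$ then produce a multiplier $\lambda^*$ and a subgradient $g(x^*) \in \partial h(x^*)$ with $\nabla f(x^*) + g(x^*) + \lambda^* a = 0$, which, together with the feasibility $a^T x^* = b$ implicit in $x^*$ being stationary, is precisely the stationarity condition for \eqref{modelcon}.

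For the converse I would argue by contradiction: assume $x^*$ is stationary for \eqref{modelcon} but $d := d_{N\tilde\Gamma}(x^*) \neq 0$. The stationarity condition at $x^*$ and the KKT conditions for \eqref{dirtotal2} at $d$ yield, for some multipliers $\lambda^*,\mu^* \in \rset^{}$ and subgradients $g(x^*) \in \partial h(x^*)$, $g(x^* + d) \in \partial h(x^* + d)$,
\begin{equation*}
\nabla f(x^*) + g(x^*) + \lambda^* a = 0, \qquad \nabla f(x^*) + N D_{\tilde\Gamma} d + g(x^* + d) + \mu^* a = 0.
\end{equation*}
Subtracting these two identities and taking the inner product with $d$ kills the Lagrange contribution $(\mu^* - \lambda^*) \langle a, d \rangle$, because $d \in S$ forces $a^T d = 0$. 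What remains is
\begin{equation*}
N \|d\|_{\tilde\Gamma}^2 + \langle g(x^* + d) - g(x^*),\, d \rangle = 0,
\end{equation*}
and both terms are nonnegative (the second by monotonicity of $\partial h$), forcing $d = 0$ and contradicting our assumption.

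The only real subtlety beyond Lemma \ref{lemmaconv} is the bookkeeping with the Lagrange multiplier: the whole argument hinges on the fact that the feasible search direction lies in $S = \{s : a^T s = 0\}$, so pairing with $d$ annihilates the multiplier terms. Apart from invoking the KKT conditions for the linearly constrained subproblem, no new ingredient is required.
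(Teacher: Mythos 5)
Your proposal is correct and follows essentially the same route as the paper's own proof: the forward direction via the KKT conditions of subproblem \eqref{dirtotal2} at $s=0$, and the converse by contradiction, subtracting the two stationarity systems, pairing with $d_{N\tilde\Gamma}(x^*)\in S$ so that $a^Td=0$ annihilates the multiplier terms, and concluding from monotonicity of $\partial h$. The only cosmetic difference is a factor of $N$ (the paper divides the resulting identity by $N$), which does not affect the argument.
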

\begin{proof}: Based on  the optimality conditions of subproblem \eqref{dirtotal2},
it can be easily shown that if $M_2(x^*,\tilde \Gamma)=0$, then  $x^*$ is
a stationary point for the original problem \eqref{modelcon}. We
prove the converse implication  by contradiction. Assume that $x^*$
is a stationary point for \eqref{modelcon} and $M_2(x^*,\tilde \Gamma)>0$.
It follows that $d_{N\tilde{\Gamma}}(x^*)$ is a nonzero solution
of subproblem \eqref{dirtotal2} for $x = x^*$. Then, there exist the
subgradients $g(x^*) \in \partial h(x^*)$ and  $g(x^* +
d_{N\tilde{\Gamma}}(x^*)) \in \partial h(x^* + d_{N\tilde{\Gamma}}(x^*))$ and
two scalars $\gamma, \lambda \in \rset^{}$ such that the optimality
conditions for optimization problems \eqref{modelcon} and
\eqref{dirtotal2} can be written as:
\begin{equation*}
\begin{cases}
 \nabla f(x^*) + g(x^*)+ \lambda a= 0 \\
 \nabla f(x^*) + D_{N\tilde{\Gamma}} d_{N\tilde{\Gamma}}(x^*) + g(x^* + d_{N\tilde{\Gamma}}(x^*)) + \gamma a=0.
\end{cases}
\end{equation*}
Taking the difference of the two relations above and considering the
inner product with $d_{N\tilde{\Gamma}}(x^*) \not=0$ on both sides of the
equation, we get:
\begin{equation*}
\norm{d_{N\tilde{\Gamma}}(x^*)}_{\tilde \Gamma}^2 + \frac{1}{N}\langle g(x^* +
d_{N\tilde{\Gamma}}(x^*)) - g(x^*), d_{N\tilde{\Gamma}}(x^*) \rangle =0,
\end{equation*}
where we used that $a^T d_{N\tilde{\Gamma}}(x^*) =0$. From convexity
of the function $h$ we see that both terms in the above sum are
nonnegative and thus  $d_{N\tilde{\Gamma}}(x^*) =0$, which
contradicts our hypothesis. In conclusion, we get  $M_2(x^*,\tilde
\Gamma)=0$.  \qed
\end{proof}

%%%%%%%%%%%%%%%%%%%%%%%%%%%%%%%%%%%%%%%%%%%%%%%%%%%%%%

\subsection{Convergence of Algorithm (2-RCD)}

In order to provide the convergence results of Algorithm (2-RCD), we
have to introduce some definitions and auxiliary results. We denote
by $\text{supp}(x)$ the set of indexes  corresponding to  the
nonzero coordinates in the vector $x \in \rset^n$.
\begin{definition}
Let $d, d' \in \rset^n$, then the vector $d'$ is {\it conformal} to
$d$ if: $ \text{supp}(d') \subseteq \text{supp}(d)$ and  $d'_j d_j
\ge 0$ for all $j=1, \dots, n$.
\end{definition}
We introduce the notion of elementary vectors for the linear
subspace $S = Null(a^T)$.
\begin{definition}
An elementary vector $d$ of $S$ is a vector $d \in S$ for which
there is no nonzero $d' \in S$ conformal to $d$ and
$\text{supp}(d')\neq \text{supp}(d)$.
\end{definition}

We now present some results for  elementary vectors and conformal
realization, whose proofs can be found  in
\cite{Roc:69,Roc:84,TseYun:09}. A particular case of Exercise 10.6
in \cite{Roc:84} and an interesting result in \cite{Roc:69} provide
us the following lemma:
\begin{lemma}\cite{Roc:69,Roc:84} \label{lemma1}
Given $d \in S$, if $d$ is an elementary vector, then
$\abs{\text{supp}(d)} \le 2$. Otherwise, $d$ has a conformal
realization  $d = d^1  + \dots + d^s$, where $s \ge 2$ and $d^t \in
S$ are elementary vectors conformal to $d$ for all $t=1, \dots, s$.
\end{lemma}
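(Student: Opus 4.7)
Both parts are classical results of Rockafellar on elementary vectors of a subspace. Since here $S = \mathrm{Null}(a^T)$ is cut out by a single linear equation, the arguments simplify to direct linear‑algebra manipulations, and the plan is the following.

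For the first assertion I would argue by contrapositive: suppose $d\in S$ with $k=|\mathrm{supp}(d)|\ge 3$, and exhibit a nonzero conformal $d'\in S$ whose support is strictly inside $\mathrm{supp}(d)$. Consider the subspace
\[
T = \{x\in S:\ \mathrm{supp}(x)\subseteq \mathrm{supp}(d)\},
\]
which is the intersection of the coordinate subspace $\{x:x_j=0\text{ for }j\notin\mathrm{supp}(d)\}$ (dimension $k$) with the single hyperplane $a^Tx=0$; hence $\dim T\ge k-1\ge 2$. Pick any $d''\in T$ linearly independent of $d$, and replace $d''$ by $-d''$ if necessary so that some index $j_0\in\mathrm{supp}(d'')$ has $d_{j_0}d''_{j_0}>0$. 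Set
\[
t^*=\min\Bigl\{\,\tfrac{d_j}{d''_j}\ :\ j\in\mathrm{supp}(d''),\ d_jd''_j>0\,\Bigr\},
\]
which is positive and attained. A straightforward case check (on the signs of $d_j$ and $d''_j$) shows that $d':=d-t^*d''$ is conformal to $d$, lies in $S$, has $d'_{j^*}=0$ at the minimizing index, and is nonzero because $d$ and $d''$ are linearly independent. This contradicts the elementarity of $d$.

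For the second assertion I would induct on $|\mathrm{supp}(d)|$. If $d$ is elementary we are done with $s=1$, and by part~1 elementarity is automatic whenever $|\mathrm{supp}(d)|\le 2$, giving the base case. For the inductive step, assume $d$ is not elementary; then by definition there is a nonzero $\tilde d\in S$, conformal to $d$, with $\mathrm{supp}(\tilde d)\subsetneq\mathrm{supp}(d)$. Rescale: let
\[
\beta=\max\Bigl\{\,\tfrac{\tilde d_j}{d_j}\ :\ j\in\mathrm{supp}(\tilde d)\,\Bigr\}>0,
\]
which is well defined because conformality forces $\tilde d_j/d_j\ge 0$ whenever $\tilde d_j\ne 0$. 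Set $d^{\mathrm{I}}:=\tilde d/\beta$ and $d^{\mathrm{II}}:=d-d^{\mathrm{I}}$. Then $d^{\mathrm{I}},d^{\mathrm{II}}\in S$ and each is conformal to $d$ (the key inequality is $|d^{\mathrm{I}}_j|\le |d_j|$ with the same sign, so that $d^{\mathrm{II}}_jd_j=d_j^2-d^{\mathrm{I}}_jd_j\ge 0$). Moreover both supports are strictly smaller than $\mathrm{supp}(d)$: $\mathrm{supp}(d^{\mathrm{I}})\subseteq\mathrm{supp}(\tilde d)\subsetneq\mathrm{supp}(d)$, while $d^{\mathrm{II}}$ vanishes at the index achieving the maximum defining $\beta$. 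The induction hypothesis applies to $d^{\mathrm{I}}$ and $d^{\mathrm{II}}$, producing conformal decompositions into elementary vectors that are conformal to $d^{\mathrm{I}}$ and $d^{\mathrm{II}}$, respectively, and hence (by transitivity of the conformality relation along a conformal sum) conformal to $d$. Concatenating the two decompositions yields the required $d=d^1+\cdots+d^s$ with $s\ge 2$.

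The main obstacle, and the place that requires the most care, is the sign/magnitude bookkeeping: in part~1, choosing the orientation of $d''$ and verifying that $t^*$ indeed exists and produces a conformal vector, and in part~2, choosing the scaling constant $\beta$ so that simultaneously (a) one new coordinate is annihilated, (b) $d^{\mathrm{I}}$ remains conformal to $d$, and (c) the residual $d-d^{\mathrm{I}}$ is still conformal to $d$. Everything else is straightforward induction and the dimension count $\dim T\ge k-1$, which is where the fact that $S$ is defined by a single equation enters.
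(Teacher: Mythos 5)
The paper gives no proof of this lemma at all: it is quoted verbatim from Rockafellar (Exercise 10.6 of \emph{Network Flows and Monotropic Optimization} and the 1969 paper on elementary vectors), so your self-contained argument is a genuine addition rather than a variant of the paper's route. Your proof is correct. Part~1 exploits exactly the right feature, namely that $S$ has codimension one inside the coordinate subspace spanned by $\mathrm{supp}(d)$, so $\dim T\ge k-1\ge 2$ yields a $d''$ independent of $d$, and the ratio-test choice of $t^*$ does produce a nonzero conformal vector in $S$ that kills one coordinate of the support; part~2 is the standard peel-off induction, and your checks that $d^{\mathrm{I}}$ and $d^{\mathrm{II}}$ are both nonzero, conformal to $d$, and have strictly smaller support are all sound, as is the transitivity of conformality used to concatenate the two decompositions. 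The only inaccuracy is your remark that ``by part~1 elementarity is automatic whenever $|\mathrm{supp}(d)|\le 2$'': part~1 gives only the forward implication, and the converse fails when $a$ has zero entries (e.g.\ $a=(0,0,1)$, $d=(1,1,0)$ is not elementary since $(1,0,0)\in S$ is conformal to it with smaller support). This does not damage the induction, because the recursion terminates anyway --- the support size strictly decreases at each split, and every nonzero vector with singleton support in $S$ is elementary --- but the base case should be stated as $|\mathrm{supp}(d)|=1$ rather than $\le 2$. What your approach buys over the paper's bare citation is a short, elementary, and verifiable argument specialized to the single-constraint case actually used in the analysis of Algorithm (2-RCD); what it gives up is the generality of Rockafellar's version, which applies to subspaces of arbitrary codimension (with the bound $|\mathrm{supp}(d)|\le m+1$ for $m$ constraints) and also controls the number $s$ of elementary pieces, neither of which is needed here.
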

An important property of convex and separable functions is given by
the following lemma:
\begin{lemma} \label{lemma2} \cite{TseYun:09}
Let $h$ be componentwise separable and convex. For any  $x, x+d \in
\text{dom} h$, let $d$ be expressed as  $d = d^1+ \dots + d^s$ for
some $s \ge 2$ and some nonzero $d^t \in \rset^n$ conformal to $d$
for all $t=1, \dots, s$. Then,
\begin{equation*}
h(x+d) - h(x) \ge \sum\limits_{t=1}^s \left(h(x+d^t) - h(x)\right),
\end{equation*}
where  $d^t \in S$ are elementary vectors conformal to $d$ for all
$t=1, \dots, s$.
\end{lemma}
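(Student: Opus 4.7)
The plan is to exploit the coordinatewise separability of $h$ to reduce the inequality to a one-dimensional statement, then use the conformal structure to pin down the sign pattern of the summands and finally invoke a standard superadditivity property of convex functions vanishing at the origin.

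First I would write $h(y) = \sum_{i=1}^n h_i(y_i)$ using Assumption \ref{mainassumption2}(ii). Both sides of the target inequality then split as a sum over coordinates, so it suffices to prove for each $i=1,\dots,n$ that
\begin{equation*}
h_i(x_i + d_i) - h_i(x_i) \ge \sum_{t=1}^s \bigl[h_i(x_i + d^t_i) - h_i(x_i)\bigr].
\end{equation*}
This coordinatewise reduction is the cleanest way to use separability and will make the conformal hypothesis easy to exploit.

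Next I would analyze each coordinate $i$ using the definition of conformality. If $d_i=0$, then $i \notin \mathrm{supp}(d)$, and because $\mathrm{supp}(d^t)\subseteq \mathrm{supp}(d)$ for every $t$, we get $d^t_i=0$ for all $t$, so both sides of the coordinatewise inequality vanish. If $d_i\neq 0$, then the condition $d^t_i d_i \ge 0$ forces every $d^t_i$ to have the same sign as $d_i$ (or to be zero); by flipping a global sign it is enough to treat $d_i>0$, in which case $d^t_i \ge 0$ for every $t$ and $\sum_{t=1}^s d^t_i = d_i$.

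The key step is then a superadditivity argument on the convex function $\phi(\tau) = h_i(x_i+\tau) - h_i(x_i)$ defined for $\tau \ge 0$. Note $\phi(0)=0$, $\phi$ is convex (as $h_i$ is), and $\phi$ is finite on $[0,d_i]$ because $x$ and $x+d$ lie in $\mathrm{dom}\,h$ and $\mathrm{dom}\,h_i$ is convex. For any $a,b\ge 0$ with $a+b>0$, writing $a=\tfrac{a}{a+b}(a+b)+\tfrac{b}{a+b}\cdot 0$ and $b=\tfrac{b}{a+b}(a+b)+\tfrac{a}{a+b}\cdot 0$ and applying convexity together with $\phi(0)=0$ gives $\phi(a)+\phi(b)\le \phi(a+b)$. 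Iterating this pairwise superadditivity $s-1$ times yields
\begin{equation*}
\phi(d_i) = \phi\Bigl(\sum_{t=1}^s d^t_i\Bigr) \ge \sum_{t=1}^s \phi(d^t_i),
\end{equation*}
which is exactly the coordinatewise inequality. Summing over $i$ recovers the claimed bound.

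I do not expect a serious obstacle here; the only subtlety is verifying that the intermediate points $x_i+d^t_i$ lie in $\mathrm{dom}\,h_i$ (handled by convexity of the effective domain together with $0 \le d^t_i \le d_i$) and carefully handling the sign of $d_i$, both of which are routine once the conformal property has been unpacked.
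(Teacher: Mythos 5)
Your proof is correct. Note that the paper itself gives no proof of this lemma --- it is imported verbatim from Tseng and Yun \cite{TseYun:09} --- so there is no in-paper argument to compare against; your route (coordinatewise reduction via separability, then superadditivity of the convex function $\phi(\tau)=h_i(x_i+\tau)-h_i(x_i)$ with $\phi(0)=0$) is exactly the standard one, and your handling of the zero coordinates, the sign normalization, and the domain of $h_i$ is sound. One small streamlining: rather than iterating pairwise superadditivity $s-1$ times, you can apply convexity once per term by writing $x_i+d^t_i=(1-\lambda_t)\,x_i+\lambda_t\,(x_i+d_i)$ with $\lambda_t=d^t_i/d_i\in[0,1]$ and $\sum_{t=1}^s\lambda_t=1$, so that $h_i(x_i+d^t_i)\le(1-\lambda_t)h_i(x_i)+\lambda_t h_i(x_i+d_i)$; summing over $t$ yields the coordinatewise inequality in one step.
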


\begin{lemma} \label{lemmaaux}
If Assumption \ref{mainassumption2} holds  and  sequence $x^k$ is
generated by Algorithm (2-RCD) using  the uniform distribution, then
the following inequality is valid:
\begin{align*}
 & E[\psi_{L_{i_kj_k}\textbf{1}}(d_{i_kj_k};x^k)| \xi^{k-1} ] \\
 & \quad \le \left(1 - \frac{2}{N(N-1)}\right) F(x^k) + \frac{2}{N(N-1)}
 \psi_{N\Gamma}(d_{N\Gamma}(x^k);x^k) \quad  \forall k \ge 0.
\end{align*}
\end{lemma}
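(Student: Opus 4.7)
The plan is to exhibit, for each unordered pair $(i,j)$ with $i<j$, a feasible point $\bar{d}^{ij}\in S_{ij}$ whose pairwise sum reconstructs $d^\star := d_{N\Gamma}(x^k)$, and to use them as test points in the subproblem \eqref{dir2}. Writing the uniform conditional expectation explicitly,
$$E[\psi_{L_{i_kj_k}\mathbf{1}}(d_{i_kj_k};x^k)\mid \xi^{k-1}] \;=\; \frac{2}{N(N-1)}\sum_{i<j}\psi_{L_{ij}\mathbf{1}}(d_{ij};x^k),$$
and noting that $d_{ij}$ minimizes $\psi_{L_{ij}\mathbf{1}}(\cdot;x^k)$ on $S_{ij}$ by definition, it suffices to bound the sum $\sum_{i<j}\psi_{L_{ij}\mathbf{1}}(\bar{d}^{ij};x^k)$.

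The construction of the $\bar{d}^{ij}$ rests on Lemma \ref{lemma1}: decompose $d^\star\in S$ as a conformal sum $d^\star = \sum_{t=1}^s d^t$ of elementary vectors, each with $|\text{supp}(d^t)|\le 2$. Every $d^t$ lies in some $S_{i_tj_t}$ (if its support has a single block, I pick any pair containing that block), and I set $\bar{d}^{ij}:=\sum_{t:(i_t,j_t)=(i,j)}d^t\in S_{ij}$. By construction $\sum_{i<j}\bar{d}^{ij}=d^\star$ and each $\bar{d}^{ij}$ is conformal to $d^\star$ (as a sum of vectors conformal to $d^\star$).

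Expanding $\sum_{i<j}\psi_{L_{ij}\mathbf{1}}(\bar{d}^{ij};x^k)$, the linear terms collapse into $\langle\nabla f(x^k),d^\star\rangle$. For the nonsmooth part, Lemma \ref{lemma2} applied to the decomposition $d^\star=\sum_{i<j}\bar{d}^{ij}$ gives
$$\sum_{i<j}\bigl(h(x^k+\bar{d}^{ij})-h(x^k)\bigr)\;\le\;h(x^k+d^\star)-h(x^k),$$
so that $\sum_{i<j}h(x^k+\bar{d}^{ij})\le h(x^k+d^\star)+\bigl(\tfrac{N(N-1)}{2}-1\bigr)h(x^k)$. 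For the quadratic term, conformity is used componentwise: for any scalar index $l$ in block $i$, each $\bar{d}^{\{i,j\}}_l$ shares the sign of $d^\star_l$ and $\sum_{j\ne i}\bar{d}^{\{i,j\}}_l=d^\star_l$, hence $|\bar{d}^{\{i,j\}}_l|\le|d^\star_l|$; summing over $l\in B(i)$ gives $\|\bar{d}^{ij}_i\|^2\le\|d^\star_i\|^2$. Symmetrizing over blocks and using $L_{ij}=L_{ji}$ together with $\Gamma_i=\tfrac{1}{N}\sum_j L_{ij}$,
$$\sum_{i<j}L_{ij}\|\bar{d}^{ij}\|^2 \;=\; \sum_{i}\sum_{j\ne i}L_{ij}\|\bar{d}^{\{i,j\}}_i\|^2 \;\le\; \sum_i \|d^\star_i\|^2\sum_{j\ne i}L_{ij}\;\le\; N\|d^\star\|_\Gamma^2.$$

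Combining the three estimates yields $\sum_{i<j}\psi_{L_{ij}\mathbf{1}}(\bar{d}^{ij};x^k)\le\bigl(\tfrac{N(N-1)}{2}-1\bigr)F(x^k)+\psi_{N\Gamma}(d^\star;x^k)$, and dividing by $\tfrac{N(N-1)}{2}$ gives the claim. The main obstacle is the quadratic estimate: it is precisely the componentwise conformity inequality $|\bar{d}^{ij}_l|\le|d^\star_l|$ that allows the pair-dependent constant $L_{ij}$ to be absorbed into the target $N\|d^\star\|_\Gamma^2$. Without this cancellation, a weighted-average argument through the $\Gamma_i$ would not close.
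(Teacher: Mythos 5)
Your proof is correct and follows essentially the same route as the paper's: both use the minimizer property of $d_{i_kj_k}$ with test points in $S_{ij}$ obtained from the conformal realization of $d_{N\Gamma}(x^k)$ (Lemma \ref{lemma1}), apply Lemma \ref{lemma2} to the separable nonsmooth part, and absorb the pairwise constants via $L_{ij}\le N\Gamma_i$. The only (cosmetic) difference is in the quadratic term, where the paper first recombines $\sum_{i,j}L_{ij}\norm{s_{ij}}^2\le\norm{\sum_{i,j}\sqrt{L_{ij}}s_{ij}}^2$ using Lemma \ref{lemma2} for $\norm{\cdot}^2$ and then invokes monotonicity in the weights, while you bound the sum termwise through the componentwise conformity inequality $\abs{\bar d^{ij}_l}\le\abs{d^\star_l}$ — both yield the same bound $N\norm{d^\star}_\Gamma^2$.
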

\begin{proof}:
As in the previous sections, for a simple exposition we drop $k$
from our derivations: e.g. the current point is denoted $x$, next
iterate $x^+=x+ U_id_i+U_jd_j$, where direction $d_{ij}$ is given by
Algorithm (2-RCD) for some random selection of pair $(i,j)$ and
$\xi$ instead of $\xi^{k-1}$. From the relation \eqref{psialpha} and
the property of minimizer $d_{ij}$ we have:
\[  \psi_{L_{ij}\textbf{1}}(d_{ij};x) \leq \psi_{L_{ij}\textbf{1}}(s_{ij};x) \quad
\forall   s_{ij} \in S_{ij}. \] Taking expectation in both sides
w.r.t. random variable $(i, j)$ conditioned on $\xi$ and recalling
that $p_{ij} = \frac{2} {N(N-1)}$, we get:
\begin{align*}
& E[\psi_{L_{ij}\textbf{1}}(d_{ij};x)| \; \xi ] \\
 & \le  f(x)+\frac{2}{N(N-1)} \Big[\sum\limits_{i,j} \langle
 \nabla_{ij}f(x),s_{ij} \rangle + \sum\limits_{i,j}\frac{L_{ij}}{2}\norm{s_{ij}}^2 +
  \sum\limits_{i,j}h(x+s_{ij}) \Big] \\
& =  f(x)+\frac{2}{N(N-1)} \Big[ \sum\limits_{i,j} \langle
 \nabla_{ij}f(x),s_{ij} \rangle + \sum\limits_{i,j}\frac{1}{2}\norm{\sqrt{L_{ij}}s_{ij}}^2 + \sum\limits_{i,j}h(x+s_{ij}) \Big],
\end{align*}
for all $s_{ij} \in S_{ij}$.  We can apply Lemma \ref{lemma2} for
coordinatewise separable functions $\norm{\cdot}^2$ and $h(\cdot)$
and we obtain:
\begin{align*}
E[\psi_{L_{ij}\textbf{1}}(d_{ij};x)| \; \xi ] \le
&f(x)+\frac{2}{N(N-1)}
  \Big[ \langle \nabla f(x), \sum\limits_{i,j} s_{ij} \rangle +
   \frac{1}{2}\norm{\sum\limits_{i,j} \sqrt{L_{ij}}s_{ij}}^2\\
 & \; +h(x+\sum\limits_{i,j} s_{ij})+
 \left(\frac{N(N-1)}{2}\!-\!1\right) h(x), \Big]
\end{align*}
for all $s_{ij} \in S_{ij}$. From Lemma \ref{lemma1} it follows that
any $s \in S$ has a conformal realization defined by $ s = \sum_t
s^t$, where the vectors $s^t \in S$ are elementary vectors conformal
to $s$. Therefore, observing that every elementary vector $s^t$ has
at most two nonzero blocks, then any vector $s \in S$ can be
generated by $s = \sum_{i,j} s_{ij}$, where $s_{ij} \in S$ are
conformal to $s$ and have at most two nonzero blocks, i.e. $s_{ij}
\in S_{ij}$ for some pair $(i,j)$. Due to conformal property of the
vectors $s_{ij}$, the expression $\norm{\sum_{i,j} \sqrt{L_{ij}}
s_{ij}}^2$ is nondecreasing in the weights $L_{ij}$ and taking in
account that $L_{ij}\le \min\{N\Gamma_i,N\Gamma_j\}$, the previous inequality
leads to:
\begin{align*}
& E[\psi_{L_{ij}\textbf{1}}(d_{ij};x)| \; \xi ] \\
&\le f(x) + \frac{2}{N(N-1)} \Big[ \langle \nabla f(x),
\sum\limits_{i,j} s_{ij} \rangle +
\frac{1}{2}\norm{\sum\limits_{i,j}D_{N\Gamma}^{1/2}s_{ij}}^2 +
 h(x+\sum\limits_{i,j} s_{ij}) \\
 & \qquad  + \left(\frac{N(N-1)}{2}-1\right) h(x) \Big] \\
&\!\!=\! f(x) \!+\! \frac{2}{N(N \!-\! 1)} \Big[ \langle \nabla
f(x), s \rangle \!+\! \frac{1}{2}\norm{\sqrt{N}D_\Gamma^{1/2} s}^2 \!+\!
h(x \!+\! s) \!+\! \Big(\frac{N(N \!-\!1)}{2} \!-\!1 \Big) h(x),
\Big]
\end{align*}
for all $s \in S$.  As the last inequality holds for any vector $s
\in S$, it also holds for the particular vector $d_{N\Gamma}(x) \in S$:
\begin{align*}
E[\psi_{L_{ij}\textbf{1}}(d_{ij};x)| \xi ]& \le\left(1 - \frac{2}{N(N-1)}\right) F(x) + \frac{2}{N(N-1)}\Big[ f(x) + \\
&\qquad \langle \nabla f(x), d_{N\Gamma}(x)\rangle +\frac{N}{2}\norm{d_{N\Gamma}(x)}_{\Gamma}^2\!+\!h(x\!+\!d_{N\Gamma}(x))\Big]\\
&=\left(1 - \frac{2}{N(N-1)}\right) F(x) + \frac{2}{N(N-1)}\psi_{N\Gamma}(d_{N\Gamma}(x);x).
\end{align*}
\qed
\end{proof}

%%%%%%%%%%%%%%%%%%%%%%%%%%%%%%%%%%%%%%%%%%%%%%%%%%%%%%%%%%%%%%%%%%%%%%%%%%%%%%%%%%%%%%%%%%%%%%%%%%%

\noindent The main convergence properties of Algorithm (2-RCD) are
given in the following theorem:
\begin{theorem}
\label{lemma3} If Assumption \ref{mainassumption2} holds for the
composite objective function F of problem \eqref{modelcon} and the
sequence $x^k$ is generated by Algorithm (2-RCD) using the uniform
distribution, then the following statements are valid:
\begin{enumerate}
\item[(i)] The sequence of random variables $M_2(x^k,\Gamma)$ converges  to 0 a.s.
and the sequence $F(x^k)$ converges to a random variable $\bar F$
a.s.
\item[(ii)] Any accumulation point of the sequence $x^k$ is a stationary
point for optimization problem ~\eqref{modelcon}.
\end{enumerate}
\end{theorem}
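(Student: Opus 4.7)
The plan is to mirror the proof of Theorem \ref{thconv}, adapting each ingredient to the linearly constrained setting and using Lemma \ref{lemmaaux} in place of the elementary conditional-expectation calculation used there. First I would establish a deterministic descent inequality analogous to \eqref{decr}: from the KKT conditions of the constrained subproblem \eqref{dir2} there exist a subgradient of $h_{i_k}+h_{j_k}$ at $x^k_{i_kj_k}+d_{i_kj_k}$ and a scalar multiplier $\lambda_k$ that couples the two blocks through the reduced equality $a_{i_k}^T s_{i_k}+a_{j_k}^T s_{j_k}=0$. Taking the inner product of these optimality conditions with $d_{i_kj_k}$ annihilates the multiplier term, because $a_{i_k}^T d_{i_k}+a_{j_k}^T d_{j_k}=0$; combining with the quadratic bound \eqref{objquadij} and the subgradient inequality for the convex separable $h$, exactly as in Lemma \ref{lemadec}, gives
\[
F(x^{k+1}) \le F(x^k) - \frac{L_{i_kj_k}}{2}\norm{d_{i_kj_k}}^2 \quad \forall k\ge 0,
\]
and since $a^T(U_{i_k}d_{i_k}+U_{j_k}d_{j_k})=0$, feasibility $a^T x^k=b$ is preserved along the whole sequence.

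Next I would convert this pathwise bound into a supermartingale relation involving $M_2$. From \eqref{objquadij} one has $F(x^{k+1}) \le \psi_{L_{i_kj_k}\textbf{1}}(d_{i_kj_k};x^k)$; taking conditional expectation with respect to $\xi^{k-1}$ and invoking Lemma \ref{lemmaaux} yields
\[
E[F(x^{k+1}) \mid \xi^{k-1}] \le \left(1-\tfrac{2}{N(N-1)}\right) F(x^k) + \tfrac{2}{N(N-1)}\,\psi_{N\Gamma}(d_{N\Gamma}(x^k);x^k).
\]
The function $\psi_{N\Gamma}(\cdot;x^k)$ is $1$-strongly convex in $\norm{\cdot}_{N\Gamma}$, so the analogue of \eqref{strongconvex} applied at $s=0\in S$ (valid because $S$ is a linear subspace containing $0$) produces
\[
F(x^k) - \psi_{N\Gamma}(d_{N\Gamma}(x^k);x^k) \ge \tfrac{1}{2}\norm{d_{N\Gamma}(x^k)}_{N\Gamma}^2 = \tfrac{N}{2}(M_2(x^k,\Gamma))^2,
\]
where I use $\norm{v}_{N\Gamma}^2 = N\norm{v}_\Gamma^2$ and $(M_2(x,\Gamma))^2 = \norm{D_\Gamma d_{N\Gamma}(x)}_\Gamma^{*\,2} = \norm{d_{N\Gamma}(x)}_\Gamma^2$. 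Substitution yields
\[
E[F(x^{k+1})-F^* \mid \xi^{k-1}] \le (F(x^k)-F^*) - \tfrac{1}{N-1}(M_2(x^k,\Gamma))^2,
\]
and part~(i) then follows by invoking the supermartingale Lemma \ref{mart} with $v_k=F(x^k)-F^*$, $\alpha_k=0$ and $u_k=\tfrac{1}{N-1}(M_2(x^k,\Gamma))^2$: we obtain $F(x^k)\to\bar F$ a.s.\ for some $\bar F\ge F^*$ and $\sum_k (M_2(x^k,\Gamma))^2<\infty$ a.s., forcing $M_2(x^k,\Gamma)\to 0$ a.s.

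For part~(ii) I would fix a sample path along which $M_2(x^k,\Gamma)\to 0$ and pass to a subsequence $x^{k_l}\to\bar x$; feasibility of $\bar x$ is automatic from the preservation property above. Since $\norm{d_{N\Gamma}(x^{k_l})}_\Gamma\to 0$, the definition of $d_{N\Gamma}(x^{k_l})$ gives $\psi_{N\Gamma}(d_{N\Gamma}(x^{k_l});x^{k_l}) \le \psi_{N\Gamma}(s;x^{k_l})$ for every $s\in S$, and using continuity of $f,\nabla f$ and $h$ (Assumption \ref{mainassumption2}) I pass to the limit to obtain $F(\bar x)\le\psi_{N\Gamma}(s;\bar x)$ for all $s\in S$. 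This identifies $s=0$ as the minimizer of $\psi_{N\Gamma}(\cdot;\bar x)$ over $S$, so $d_{N\Gamma}(\bar x)=0$ and hence $M_2(\bar x,\Gamma)=0$; Lemma \ref{lemmaconv2} then concludes that $\bar x$ is stationary for \eqref{modelcon}. The main obstacle is the second step: the constrained subproblem \eqref{dir2} cannot be decoupled blockwise as in the proof of Theorem \ref{thconv}, and the conformal-realization argument already packaged in Lemma \ref{lemmaaux} is precisely what bridges the local estimate at the random pair $(i_k,j_k)$ with the global quantity $\psi_{N\Gamma}(d_{N\Gamma}(x^k);x^k)$; once that estimate is in hand, the supermartingale and continuity machinery go through as in the unconstrained case.
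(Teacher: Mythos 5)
Your proposal is correct, and for part (i) it takes a genuinely different route from the paper. The paper first uses the pathwise descent \eqref{lemadec2} together with Lemma \ref{mart} (with $u_k$ built from $\norm{d_{i_kj_k}}^2$) only to conclude $F(x^k)\to\bar F$ and $d_{i_kj_k}\to 0$ a.s.; it then establishes $M_2(x^k,\Gamma)\to 0$ by a sandwich argument, bounding $\psi_{N\Gamma}(d_{N\Gamma}(x^k);x^k)$ from below via Lemma \ref{lemmaaux} and from above by $E[\psi_{N\Gamma}(d_{i_kj_k};x^k)\mid\xi^{k-1}]$ through a separate Jensen-inequality/conformal-realization computation, showing both bounds tend to $\bar F$ and finally invoking strong convexity. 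You instead derive the single supermartingale inequality
\[
E[F(x^{k+1})-F^*\mid\xi^{k-1}]\le F(x^k)-F^*-\tfrac{1}{N-1}\left(M_2(x^k,\Gamma)\right)^2,
\]
which is precisely the one-step estimate the paper reserves for the rate result (Theorem \ref{constrainedrate}), and read off both conclusions of part (i) at once from Lemma \ref{mart} with $u_k=\tfrac{1}{N-1}(M_2(x^k,\Gamma))^2$. This is cleaner: it dispenses with the upper-bounding Jensen step entirely, makes the deterministic descent \eqref{lemadec2} inessential for the a.s.\ claims (though it is still the natural way to see feasibility preservation and monotonicity), and mirrors more closely how the paper itself handles the unconstrained case in Theorem \ref{thconv}. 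Both arguments lean on Lemma \ref{lemmaaux} as the bridge between the local pairwise subproblem and the global quantity $\psi_{N\Gamma}(d_{N\Gamma}(x^k);x^k)$, and your identification $(M_2(x,\Gamma))^2=\norm{d_{N\Gamma}(x)}_\Gamma^2$ and the factor $\norm{\cdot}_{N\Gamma}^2=N\norm{\cdot}_\Gamma^2$ are handled correctly. Part (ii) coincides with the paper's argument (passing to the limit in the variational characterization of $d_{N\Gamma}(x^{k_l})$ and concluding via Lemma \ref{lemmaconv2}).
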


\begin{proof}:
(i) Using a similar reasoning as in Lemma \ref{lemadec} but for the
inequality \eqref{objquadij} we can show the following decrease in
the objective function for Algorithm (2-RCD) (i.e.  Algorithm
(2-RCD) is also a descent method):
\begin{equation}\label{lemadec2}
F(x^{k+1}) \le F(x^k) - \frac{L_{i_kj_k}}{2}\norm{d_{i_kj_k}}^2
\quad \forall k \geq 0.
\end{equation}
Further, subtracting $F^*$ from both sides, applying expectation
conditioned on $\xi^{k-1}$ and then using supermartingale
convergence theorem given in Lemma \ref{mart} we obtain  that
$F(x^k)$ converges to a random variable $\bar F$ a.s. for $k \to
\infty$. Due to almost sure convergence of sequence $F(x^k)$, it can
be easily seen that $\lim\limits_{k \to \infty} F(x^k) -
F(x^{k+1})=0$ a.s. Moreover, from \eqref{lemadec2} we have:
\begin{equation*}
 \frac{L_{i_kj_k}}{2}\norm{d_{i_k j_k}}^2 = \frac{L_{i_kj_k}}{2}\norm{x^{k+1}-x^k}^2 \le F(x^k) - F(x^{k+1})
\quad \forall k \geq 0,
\end{equation*}
which implies that
$$\lim\limits_{k \to \infty} d_{i_kj_k} = 0 \quad \text{and} \quad
  \lim\limits_{k \to \infty} \norm{x^{k+1}-x^k}=0 \quad \text{a.s.}$$

\noindent As in the previous section, for a simple exposition we
drop $k$ from our derivations: e.g. the current point is denoted
$x$, next iterate  $x^+=x+ U_id_i+U_jd_j$, where direction $d_{ij}$
is given by Algorithm (2-RCD) for some random selection of pair
$(i,j)$ and $\xi$ stands for $\xi^{k-1}$. From Lemma \ref{lemmaaux},
we obtain a sequence which bounds from below
$\psi_{N\Gamma}(d_{N\Gamma}(x);x)$ as follows:
\begin{equation*}
\frac{N(N-1)}{2} E[\psi_{L_{ij}\textbf{1}}(d_{ij};x)| \;
\xi]+\left(1-\frac{N(N-1)}{2} \right) F(x) \le
\psi_{N\Gamma}(d_{N\Gamma}(x);x).
\end{equation*}
On the other hand, from Lemma \ref{lemma1} it follows that any $s
\in S$ has a conformal realization defined by $s = \sum_{i,j}
s_{ij}$, where $s_{ij} \in S$ are conformal to $s$ and have at most
two nonzero blocks, i.e. $s_{ij} \in S_{ij}$ for some pair $(i,j)$.
Using now Jensen inequality we derive another sequence which bounds
$\psi_{N\Gamma}(d_{N\Gamma}(x);x)$ from above:
\begin{align*}
& \psi_{N\Gamma}(d_{N\Gamma}(x);x))= \min\limits_{s \in S} f(x)+ \langle
\nabla f(x), s\rangle + \frac{1}{2}\norm{s}_{N\Gamma}^2 + h(x+s)\\
& =\min\limits_{s_{ij} \in S_{ij}} \Big [f(x)+\langle \nabla f(x),
\sum\limits_{i,j} s_{ij}\rangle + \frac{1}{2}\norm{\sum\limits_{i,j} s_{ij}}_{N\Gamma}^2 +h(x+\sum\limits_{i,j} s_{ij}) \Big ]\\
%& =\min\limits_{s_{ij} \in S_{ij}} \Big [f(x)+\langle \nabla f(x), \frac{1}{N(N-1)}\sum\limits_{i,j}N(N-1)s_{ij}\rangle\\
%&+ \frac{N}{2}\norm{\frac{1}{N(N-1)}\sum\limits_{i,j}N(N-1) s_{ij}}_T^2 +
%h(x+\frac{1}{N(N-1)}\sum\limits_{i,j}N(N-1)s_{ij}) \Big ]\\
& =\min\limits_{\tilde{s}_{ij} \in S_{ij}} f(x)+\frac{1}{N(N-1)}\langle \nabla f(x), \sum\limits_{i,j}
\tilde{s}_{ij}\rangle + \frac{1}{2}\norm{\frac{1}{N(N-1)}\sum\limits_{i,j}\tilde{s}_{ij}}_{N\Gamma}^2 \\
& \qquad +h\left(x+\frac{1}{N(N-1)}\sum\limits_{i,j}\tilde{s}_{ij}\right)\\
& \le \min\limits_{\tilde{s}_{ij} \in S_{ij}}
f(x)+\frac{1}{N(N-1)}\sum\limits_{i,j}\langle \nabla
f(x),\tilde{s}_{ij}\rangle
+ \frac{1}{2N(N-1)}\sum\limits_{i,j}\norm{\tilde{s}_{ij}}_{N\Gamma}^2 \\
& \qquad
+\frac{1}{N(N-1)}\sum\limits_{i,j}h\left(x+\tilde{s}_{ij}\right)=
E[\psi_{N\Gamma}(d_{ij};x)| \xi],
\end{align*}
where we used the notation $\tilde{s}_{ij}=N(N-1)s_{ij}$. If we come
back to the notation dependent on  $k$, then using Assumption
\ref{mainassumption2} (ii) and the fact that $d_{i_kj_k} \to 0$ a.s.
we obtain that  $E[\psi_{N\Gamma}(d_{i_kj_k};x^k)| \xi^{k-1}]$ converges
to $\bar F$ a.s. for  $k \to \infty$. We conclude that both
sequences, lower and upper bounds of $\psi_{N\Gamma}(d_{N\Gamma}(x^k);x^k)$
from above, converge to $\bar F$ a.s., hence
$\psi_{N\Gamma}(d_{N\Gamma}(x^k);x^k)$ converges to $\bar F$ a.s. for $k \to \infty$.
A trivial case of strong convexity relation
\eqref{strongconvex} leads to:
\begin{equation*}
 \psi_{N\Gamma}(0;x^k) \ge \psi_{N\Gamma}(d_{N\Gamma}(x^k);x^k) + \frac{N}{2} \norm{d_{N\Gamma}(x^k)}_\Gamma^2.
\end{equation*}
Note that  $\psi_{N\Gamma}(0;x^k)=F(x^k)$ and since both sequences
$\psi_{N\Gamma}(0;x^k)$ and  $\psi_{N\Gamma}(d_{N\Gamma}(x^k);x^k)$  converge to
$\bar F$ a.s. for $k \to \infty$, from  the above strong convexity
relation it follows that the sequence $M_2(x^k;\Gamma)=
\norm{d_{N\Gamma}(x^k)}_{\Gamma}$ converges to $0$ a.s. for  $k \to \infty$.

\noindent (ii) The proof follows the same ideas as in the proof of
Theorem \ref{lemmaconv} (ii). \qed
\end{proof}

%%%%%%%%%%%%%%%%%%%%%%%%%%%%%%%%%%%%%%%%%%%%%%%%%%%%%%%%%%%%%%%%%%%%%%%%%%%%%%%%%%%%%%%%%%%%

We  now present the convergence rate   for Algorithm (2-RCD).
\begin{theorem}\label{constrainedrate}
Let $F$ satisfy Assumption \ref{mainassumption2}. Then, the
Algorithm (2-RCD) based on the uniform distribution generates a
sequence $x^k$ satisfying the following convergence rate for the
expected values of the optimality measure:
\begin{equation*}
\min\limits_{0 \le l \le k}E \left[ \left( M_2(x^l,\Gamma) \right)^2
\right] \le \frac{N \left(F(x^0) - F^* \right)}{k+1}  \quad \forall
k \geq 0.
\end{equation*}
\end{theorem}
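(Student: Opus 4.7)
The plan is to mirror the proof of Theorem \ref{thcr}, using the 2-block analogues (inequality \eqref{lemadec2}, Lemma \ref{lemmaaux}, and strong convexity of $\psi_{N\Gamma}$) in place of the 1-block ingredients. First I would work at a generic iteration, writing $x$ for $x^k$, $x^+$ for $x^{k+1}$ and $\xi$ for $\xi^{k-1}$, as is done throughout Section 3. Starting from \eqref{objquadij} evaluated at $s_{i_kj_k}=d_{i_kj_k}$, the inequality $F(x^+) \le \psi_{L_{ij}\mathbf{1}}(d_{ij};x)$ holds deterministically, so taking conditional expectation w.r.t.\ $\xi$ and applying Lemma \ref{lemmaaux} immediately gives
\begin{equation*}
E[F(x^+)\mid\xi] \;\le\; \Bigl(1-\tfrac{2}{N(N-1)}\Bigr) F(x) + \tfrac{2}{N(N-1)}\,\psi_{N\Gamma}(d_{N\Gamma}(x);x).
\end{equation*}

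Next I would rearrange this as a per-step descent inequality, using that $\psi_{N\Gamma}(0;x)=F(x)$ and that $\psi_{N\Gamma}(\cdot;x)$ is $N$-strongly convex w.r.t.\ $\|\cdot\|_\Gamma$ (the quadratic term is $\tfrac{1}{2}\|s\|_{N\Gamma}^2=\tfrac{N}{2}\|s\|_\Gamma^2$). The strong convexity inequality at the minimizer yields
\begin{equation*}
F(x) - \psi_{N\Gamma}(d_{N\Gamma}(x);x) \;\ge\; \tfrac{N}{2}\,\|d_{N\Gamma}(x)\|_\Gamma^2,
\end{equation*}
and combining with the previous display gives
\begin{equation*}
F(x) - E[F(x^+)\mid\xi] \;\ge\; \tfrac{2}{N(N-1)}\cdot\tfrac{N}{2}\,\|d_{N\Gamma}(x)\|_\Gamma^2 \;=\; \tfrac{1}{N-1}\,(M_2(x,\Gamma))^2,
\end{equation*}
where I use that $M_2(x,\Gamma)=\|D_\Gamma d_{N\Gamma}(x)\|_\Gamma^*=\|d_{N\Gamma}(x)\|_\Gamma$ directly from the definition of the dual norm.

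Finally, I would revert to the $k$-notation, take a further expectation to pass from $F(x^k)$ to $\phi^k$, and telescope from $l=0$ to $k$:
\begin{equation*}
\phi^0 - F^* \;\ge\; \phi^0 - \phi^{k+1} \;\ge\; \tfrac{1}{N-1}\sum_{l=0}^k E[(M_2(x^l,\Gamma))^2] \;\ge\; \tfrac{k+1}{N-1}\,\min_{0\le l\le k} E[(M_2(x^l,\Gamma))^2].
\end{equation*}
Using the trivial bound $N-1\le N$ produces the stated rate $\frac{N(F(x^0)-F^*)}{k+1}$. I do not expect a serious obstacle: Lemma \ref{lemmaaux} already does the heavy lifting of linking the 2-block minimizer to the full map $d_{N\Gamma}(x)$, and the only subtlety is keeping track of the correct strong-convexity constant $N$ coming from the scaling inside $\psi_{N\Gamma}$ and identifying $(M_2(x,\Gamma))^2$ with $\|d_{N\Gamma}(x)\|_\Gamma^2$ via the definitions of the dual norm $\|\cdot\|_\Gamma^*$ and the matrix $D_\Gamma$.
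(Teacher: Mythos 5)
Your proposal is correct and follows essentially the same route as the paper: the same chain $F(x^+)\le\psi_{L_{ij}\mathbf{1}}(d_{ij};x)$, Lemma \ref{lemmaaux}, the $N$-strong convexity of $\psi_{N\Gamma}(\cdot;x)$ at $s=0$, the identity $M_2(x,\Gamma)=\|d_{N\Gamma}(x)\|_\Gamma$, the relaxation $\tfrac{1}{N-1}\ge\tfrac{1}{N}$, and telescoping. No gaps.
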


\begin{proof}:
Given the current feasible point $x$, denote $x^+=x+ U_id_i+U_jd_j$
as the  next iterate, where direction $( d_i, d_j) $ is given by
Algorithm (2-RCD) for some random chosen pair $(i,j)$ and   we use
the notation  $(\phi, \phi^+, \xi)$ instead of $(\phi^k, \phi^{k+1},
\xi^{k-1})$. Based on Lipschitz inequality \eqref{objquadij} we
derive:
\begin{align*}
F(x^+) \le f(x) + \langle \nabla_{ij}f(x), d_{ij} \rangle +
 \frac{L_{ij}}{2}\norm{d_{ij}}^2 + h(x+ d_{ij}).
\end{align*}
Taking expectation conditioned on $\xi$ in both sides and using
Lemma \ref{lemmaaux} we get:
\begin{align*}
 E&[F(x^+)| \xi] \le \left(1 - \frac{2}{N(N-1)}\right) F(x) +
\frac{2}{N(N-1)}\psi_{N\Gamma}(d_{N\Gamma}(x);x).
\end{align*}
Taking now expectation w.r.t. $\xi$, we can derive:
\begin{align*}
& \phi -  \phi^+ \\
&\ge E[\psi_{N\Gamma}(0;x)] \!-\! \Big(\! 1 \!-\!
\frac{2}{N(N \!-\! 1)} \Big) \! E[\psi_{N\Gamma}(0;x)] \!-\! \frac{2}{N(N \!-\! 1)}E[\psi_{N\Gamma}(d_{N\Gamma}(x);x)]\\
&= \frac{2}{N(N-1)} \left( E[\psi_{N\Gamma}(0;x)] - E[\psi_{N\Gamma}(d_{N\Gamma}(x);x)] \right)\\
&\ge \frac{1}{N-1} E \left[ \norm{d_{N\Gamma}(x)}_{\Gamma}^2 \right] \ge
\frac{1}{N} E \left[ \left( M_2(x,\Gamma) \right)^2 \right],
\end{align*}
where we used the strong convexity property of function
$\psi_{N\Gamma}(s;x)$. Now, considering  iteration $k$ and summing up
with respect to  entire history we get:
\begin{equation*}
\frac{1}{N}  \sum\limits_{l=0}^k  E \left[ \left( M_2(x^l,\Gamma)
\right)^2 \right] \le F(x^0) - F^*.
\end{equation*}
This inequality leads us to the above result. \qed
\end{proof}

%%%%%%%%%%%%%%%%%%%%%%%%%%%%%%%%%%%%%%55

\subsection{Constrained minimization of smooth objective functions}

\noindent We now study the convergence of Algorithm (2-RCD) on the
particular case of optimization model \eqref{modelcon} with $h = 0$.
For this particular case a feasible point $x^*$   is a
\emph{stationary point} for \eqref{modelcon} if there exists
$\lambda^* \in \rset^{}$ such that:
\begin{equation}\label{optimcouple}
 \nabla f(x^*) + \lambda^* a =0  \quad \text{and}  \quad  a^Tx^*=b.
\end{equation}
For any feasible point $x$, note that exists $\lambda \in \rset^{}$
such that:
\begin{equation*}
 \nabla f(x) = \nabla f(x)_{\perp} - \lambda a,
\end{equation*}
where $\nabla f(x)_{\perp}$ is the projection of the gradient vector
$ \nabla f(x)$ onto the subspace $S$ orthogonal to the vector $a$.
Since $\nabla f(x)_{\perp} = \nabla f(x) + \lambda a$,   we defined
a particular optimality measure: \[ M_3(x, \textbf{1}) =
\norm{\nabla f(x)_{\perp}}. \] In this case the iteration of
Algorithm (2-RCD) is a projection onto a hyperplane so that the
direction $ d_{i_kj_k}$ can be computed in closed form. We denote by
$Q_{ij} \in \rset^{n \times n}$ the symmetric matrix with all blocks
zeros  except:
\begin{equation*}
Q_{ij}^{ii} = I_{n_i} - \frac{a_i a_i^T}{a_i^T a_i}, \ \
Q_{ij}^{ij}= - \frac{a_ia_j^T}{a_{ij}^Ta_{ij}}, \ \ Q_{ij}^{jj}=
I_{n_j} - \frac{a_ja_j^T}{a_{ij}^Ta_{ij}}.
\end{equation*}
It is straightforward to see that $Q_{ij}$ is positive semidefinite
(notation $Q_{ij} \succeq 0$) and $Q_{ij} a = 0$ for all pairs
$(i,j)$ with $i \neq j$. Given a probability distribution $p_{ij}$,
let us define the matrix:
\begin{equation*}
 Q = \sum\limits_{i, j} \frac{p_{ij}}{L_{ij}} Q_{ij},
\end{equation*}
that is also symmetric and positive semidefinite, since $L_{ij},
p_{ij} > 0$ for all $(i, j)$. Furthermore, since we consider all
possible pairs $(i,j)$, with $i \not = j \in \{1, \dots, N\}$, it
can be shown that the  matrix $Q$   has an eigenvalue $\nu_1(Q) = 0$
(which is a simple eigenvalue) with the associated eigenvector $a$.
It follows that $\nu_2(Q)$ (the second smallest eigenvalue of $Q$)
is positive. Since $h=0$, we have $F=f$. Using the same reasoning as
in the previous sections we can easily show that the sequence
$f(x^k)$ satisfies the following decrease:
\begin{equation}\label{coupledecrease}
f(x^{k+1}) \le f(x^k) - \frac{1}{2 L_{ij}} \nabla f(x^k)^T
Q_{ij}\nabla f(x^k) \quad \forall k \geq 0.
\end{equation}
We now give the convergence rate of Algorithm (2-RCD) for this
particular case:

\begin{theorem}
\label{constrainedrateh} Let $h = 0$ and $f$ satisfy Assumption
\ref{mainassumption2} (i). Then,  Algorithm (2-RCD) based on a
general probability distribution $p_{ij}$ generates a sequence $x^k$
satisfying the following convergence rate for the expected values of
the norm of the projected gradients onto subspace $S$:
\begin{equation*}
\min\limits_{0 \le l \le k} E \left[ \left(M_3(x^l, \textbf{1})
\right)^2 \right] \le \frac{2(F(x^0) - F^*)}{ \nu_2(Q)(k+1)}.
\end{equation*}
\end{theorem}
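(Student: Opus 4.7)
The plan is to start from the per-iteration decrease \eqref{coupledecrease} and then translate it into a statement about the expected progress at iteration $k$ by averaging the quadratic form $\nabla f(x^k)^T Q_{ij} \nabla f(x^k)$ against the probability distribution $p_{ij}$. Taking the conditional expectation w.r.t. $\xi^{k-1}$ in \eqref{coupledecrease} gives
\begin{equation*}
E[f(x^{k+1}) \mid \xi^{k-1}] \le f(x^k) - \frac{1}{2}\, \nabla f(x^k)^T \!\left( \sum_{i,j} \frac{p_{ij}}{L_{ij}} Q_{ij}\right)\! \nabla f(x^k) = f(x^k) - \frac{1}{2}\, \nabla f(x^k)^T Q\, \nabla f(x^k),
\end{equation*}
by the definition of $Q$.

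Next I would replace $\nabla f(x^k)$ by its decomposition $\nabla f(x^k) = \nabla f(x^k)_\perp - \lambda a$ and exploit the identity $Q a = 0$, which holds because each $Q_{ij}$ annihilates $a$. This gives $\nabla f(x^k)^T Q \nabla f(x^k) = \nabla f(x^k)_\perp^T Q\, \nabla f(x^k)_\perp$. Since $\nabla f(x^k)_\perp$ lies in the subspace $S = \text{Null}(a^T)$, which is precisely the orthogonal complement of the eigenvector $a$ associated with the simple zero eigenvalue of $Q$, the Courant--Fischer characterization yields
\begin{equation*}
\nabla f(x^k)_\perp^T Q\, \nabla f(x^k)_\perp \ge \nu_2(Q)\, \norm{\nabla f(x^k)_\perp}^2 = \nu_2(Q)\, (M_3(x^k,\mathbf{1}))^2.
\end{equation*}
Combining these steps and taking full expectation produces the one-step recurrence
\begin{equation*}
\phi^{k+1} \le \phi^k - \frac{\nu_2(Q)}{2}\, E\!\left[ (M_3(x^k,\mathbf{1}))^2 \right].
\end{equation*}

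To close the argument, I would telescope this inequality from $l = 0$ to $k$, use $\phi^{k+1} \ge F^*$, and bound the minimum over $l$ by the average:
\begin{equation*}
\min_{0 \le l \le k} E\!\left[ (M_3(x^l,\mathbf{1}))^2 \right] \le \frac{1}{k+1}\sum_{l=0}^k E\!\left[ (M_3(x^l,\mathbf{1}))^2 \right] \le \frac{2(F(x^0) - F^*)}{\nu_2(Q)(k+1)}.
\end{equation*}
The main obstacle is the eigenvalue bound: one must justify carefully that the zero eigenvalue of $Q$ is simple (so that $\nu_2(Q) > 0$) and that $\nabla f(x^k)_\perp$ is indeed orthogonal to the corresponding eigenvector $a$. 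The first point relies on the structural fact, noted in the text, that because all pairs $(i,j)$ with $i \neq j$ appear in the sum defining $Q$, the kernel of $Q$ is exactly $\text{span}\{a\}$; the second is immediate from the definition $\nabla f(x^k)_\perp = \nabla f(x^k) + \lambda a$, which places it in $S$. All remaining manipulations are routine.
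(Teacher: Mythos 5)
Your proposal is correct and follows essentially the same route as the paper's own proof: conditional expectation of the per-iteration decrease \eqref{coupledecrease} to bring in $Q$, the substitution $\nabla f(x^k) \mapsto \nabla f(x^k)_\perp$ via $Qa=0$, the eigenvalue bound through $\nu_2(Q)$ on the orthogonal complement of $a$, and telescoping. No gaps; the simplicity of the zero eigenvalue that you flag is exactly the structural fact the paper invokes in the paragraph preceding the theorem.
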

\begin{proof}
As in the previous section, for a simple exposition we drop $k$ from
our derivations: e.g. the current point is denoted $x$, and $x^+=x+
U_id_i+U_jd_j$, where direction $d_{ij}$ is given by Algorithm
(2-RCD) for some random selection of pair $(i,j)$. Since $h=0$, we
have $F=f$. From \eqref{coupledecrease} we have the following
decrease: $f(x^+) \le f(x) - \frac{1}{2 L_{ij}} \nabla f(x)^T
Q_{ij}\nabla f(x)$. Taking now expectation conditioned in $\xi$ in
this inequality  we have:
\begin{equation*}
E[f(x^+) | \; \xi] \le f(x) - \frac{1}{2} \nabla f(x)^T Q \nabla
f(x).
\end{equation*}
From the above decomposition of  the gradient  $\nabla f(x)= \nabla
f(x)_{\perp}  -  \lambda a$ and the observation that $Qa=0$, we
conclude that the previous inequality does not change if we replace
$\nabla f(x)$ with $\nabla f(x)_{\perp}$:
\begin{equation*}
E[f(x^+) | \xi] \le f(x) - \frac{1}{2} \nabla f(x)_{\perp}^T Q
\nabla f(x)_{\perp}.
\end{equation*}
Note that $\nabla f(x)_{\perp}$ is included in the orthogonal
complement of the span of vector $a$, so that the above inequality
can be relaxed to:
\begin{equation}\label{relaxeddecrease}
E[f(x^+) | \; \xi] \le f(x) - \frac{1}{2} \nu_2(Q) \norm{\nabla
f(x)_{\perp}}^2 = f(x) - \frac{\nu_2(Q)}{2} \left( M_3(x,
\textbf{1}) \right)^2.
\end{equation}
Coming back to the notation dependent on $k$ and   taking
expectation in both sides of inequality \eqref{relaxeddecrease}
w.r.t. $\xi^{k-1}$,  we have:
\begin{equation*}
\phi^k - \phi^{k+1} \ge \frac{\nu_2(Q)}{2}E \left[ \left( M_3(x^k,
\textbf{1}) \right)^2 \right].
\end{equation*}
Summing w.r.t. the entire history, we obtain the above result. \qed
\end{proof}

Note  that our convergence proofs given in this section (Theorems 4,
5 and 6) are different from the convex case
\cite{Nec:13,NecPat:12,NecNes:12}, since here we introduce another
optimality measure  and we use supermartingale convergence theorem
in the analysis.  It is important to see  that the convergence rates
for the Algorithm (2-RCD) given in Theorems \ref{constrainedrate}
and \ref{constrainedrateh} are typical for the class of first order
methods designed for solving nonconvex and nonsmotth optimization
problems, e.g. in \cite{Bec:12,Nes:07} similar results are obtained
for other gradient based methods designed to solve nonconvex
problems.

%%%%%%%%%%%%%%%%%%%%%%%%%%%%%%%%%%%%%%%%%%%%%
%%%%%%%%%%%%%%%%%%%%%%%%%%%%%%%%%%%%%%%%%%%%

\section{Numerical Experiments}
In this  section we analyze the practical performance  of the random
coordinate descent methods derived in this paper and compare our
algorithms with some recently developed state-of-the-art algorithms
from the literature. Coordinate descent methods are one of the most
efficient classes of algorithms for large-scale optimization
problems. Therefore,  we   present extensive numerical simulation
for large-scale nonconvex problems with dimension ranging from
$n=10^3$ to $n=10^7$. For numerical experiments,  we  implemented
all the algorithms in C code and we  performed our tests on a PC
with Intel Xeon E5410 CPU and 8 Gb RAM memory.

\noindent For tests we choose as application the \textit{eigenvalue
complementarity problem}.  It is well-known that many applications
from mathematics, physics and engineering require the efficient
computation of eigenstructure of some symmetric matrix. A brief list
of these applications includes optimal control, stability analysis
of dynamic systems, structural dynamics, electrical networks,
quantum chemistry, chemical reactions and economics (see
\cite{FaiMar:12,MonTor:04,Par:97,ThiMoe:10} and the reference
therein for more details). The eigenvalues of a symmetric matrix $A$
have an elementary definition as the roots of the characteristic
polynomial $det(A-\lambda I)$. In realistic applications the
eigenvalues can have an important role, for example to describe
expected long-time behavior of a dynamical system, or to be only
intermediate values of a computational method. For many applications
the optimization approach for eigenvalues computation is better than
the algebraic one. Although, the eigenvalues computation can be
formulated as a convex problem, the corresponding feasible set is
complex so that the projection on this set is numerically very
expensive, at least of order  $\mathcal{O}(n^2)$. Therefore,
classical methods for convex optimization are not adequate for
large-scale eigenvalue problems. To obtain a lower iteration
complexity as $\mathcal{O}(n)$ or even $\mathcal{O}(p)$, where $p
\ll n$, an appropriate way to approach these problems is through
nonconvex formulation and using coordinate descent methods. A
classical optimization problem formulation involves the Rayleigh
quotient as the objective function of some nonconvex optimization
problem \cite{MonTor:04}. The eigenvalue complementarity problem
(EiCP) is an extension of the classical eigenvalue problem, which
can be stated as: {\it given matrices A and B, find $\nu \in
\rset^{}$ and $x \neq 0$ such that}
$$ \begin{cases}
      w = (\nu B - A)x, \\
      w \ge 0,\; x \geq 0, \; w^Tx=0.
 \end{cases}
$$
If matrices A and B are symmetric, then we have \textit{symmetric
(EiCP)}. It has been shown in \cite{ThiMoe:10} that symmetric (EiCP)
is equivalent with finding a stationary point of a generalized
Rayleigh quotient on the simplex:
\begin{align*}
 \min\limits_{x \in \rset^n} \ & \frac{x^T A x}{x^T B x} \\
   \text{s.t.:} \ \ & \textbf{1}^T x = 1, \;  x \ge 0,
\end{align*}
where we recall that  $\textbf{1}=[1 \dots 1]^T \in \rset^n$. A
widely used alternative formulation of (EiCP) problem is the
nonconvex logarithmic formulation (see \cite{JudRay:08,ThiMoe:10}):
\begin{align}
   \max\limits_{x \in \rset^n}& \  f(x) \;\; \left(= \ln{\frac{x^TAx}{x^TBx}} \right) \label{logform}\\
   \text{s.t.:} \ \ &  \textbf{1}^T x = 1, \; x \ge 0. \nonumber
\end{align}
Note that optimization problem \eqref{logform} is a particular case
of \eqref{modelcon}, where $h$ is the indicator function of  the
nonnegative orthant. In order to have a well-defined objective
function for the logarithmic case, in the most of the aforementioned
papers the authors assumed positive definiteness of matrices $A
=[a_{ij}]$ and $B = [b_{ij}]$. In this paper, in order to have a
more practical application with a highly nonconvex objective
function \cite{FaiMar:12}, we consider the class of nonnegative
matrices, i.e. $A, B \ge 0$, with positive diagonal elements, i.e.
$a_{ii} > 0 $ and $b_{ii} > 0$ for all $i=1, \cdots, n$. For this
class of matrices the problem \eqref{logform} is also well-defined
on the simplex. Based on Perron-Frobenius theorem, we have that for
matrices $A$ that are also irreducible and $B=I_n$ the corresponding
stationary point of the (EiCP) problem \eqref{logform} is the global
minimum of this problem or equivalently is the Perron vector, so
that any accumulation point of the sequence generated  by our
Algorithm (2-RCD) is also a global minimizer. In order to apply our
Algorithm (2-RCD) on the logarithmic formulation of the (EiCP)
problem \eqref{logform}, we have to compute an approximation of the
Lipschitz constants $L_{ij}$.  For brevity, we introduce the
notation $\Delta_n = \{x \in \rset^n : \textbf{1}^T x = 1, \;  x \ge
0 \}$ for the standard simplex  and the function $g_A(x) =
\ln{x^TAx}$. For a given matrix $A$, we denote by $A_{ij} \in
\rset^{(n_i+n_j) \times (n_i+n_j)}$ the $2 \times 2$ block matrix of
$A$ by taking the pair $(i,j)$ of block rows of matrix $A$ and then
the pair $(i,j)$ of block columns of $A$.
\begin{lemma}
Given a nonnegative matrix $A \in \rset^{n \times n}$ such that
$a_{ii} \neq 0$ for all $i=1, \cdots, n$, then the function $g_A(x)
=  \ln{x^TAx}$ has 2 block coordinate Lipschitz gradient on the
standard simplex, i.e.:
\begin{equation*}
 \norm{\nabla_{ij}g_A(x+s_{ij}) - \nabla_{ij}g_A(x)} \le
 L_{ij}^A\norm{s_{ij}}, \;\; \forall x, x+s_{ij} \in \Delta_n,
\end{equation*}
where an upper bound on  Lipschitz constant $L_{ij}^A$ is given by
\begin{equation*}
L_{ij}^A \le \frac{2N}{\min\limits_{1 \le i \le N} a_{ii}}
\norm{A_{ij}}.
\end{equation*}
\end{lemma}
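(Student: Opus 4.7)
The approach I would take is to establish the Lipschitz bound via the Hessian together with the mean-value identity
\[
\nabla_{ij} g_A(x+s_{ij}) - \nabla_{ij} g_A(x) = \int_0^1 [\nabla^2 g_A(x+ts_{ij})]_{ij,ij}\, s_{ij}\, dt,
\]
so that it suffices to prove a uniform estimate on $\|[\nabla^2 g_A(y)]_{ij,ij}\|$ for $y \in \Delta_n$. Direct differentiation of $g_A(x) = \ln(x^T A x)$ yields
\[
\nabla g_A(x) = \frac{(A+A^T)x}{x^T A x}, \qquad
\nabla^2 g_A(x) = \frac{A+A^T}{x^T A x} - \frac{((A+A^T)x)((A+A^T)x)^T}{(x^T A x)^2},
\]
and extracting the $(i,j)\times(i,j)$ submatrix, using $U_{ij}^T(A+A^T)U_{ij} = A_{ij}+A_{ij}^T$, gives $[\nabla^2 g_A(x)]_{ij,ij} = M_1(x) - M_2(x)$, where $M_1(x) = (A_{ij}+A_{ij}^T)/(x^T A x)$ and $M_2(x)$ is the positive semidefinite rank-one matrix formed from $((A+A^T)x)_{ij}$.

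Next I would handle the two ingredients that together generate the constant in the statement. For the denominator, nonnegativity of $A$ and $x$ combined with Cauchy-Schwarz applied to the simplex identity $\sum_k x_k = 1$ yields
\[
x^T A x \;\ge\; \sum_{k=1}^N a_{kk}\, x_k^2 \;\ge\; \Bigl(\min_{1\le k\le N} a_{kk}\Bigr)\|x\|_2^2 \;\ge\; \frac{\min_k a_{kk}}{N},
\]
which accounts for the factor $N/\min_k a_{kk}$. For the matrix part, since $\|A_{ij}+A_{ij}^T\| \le 2\|A_{ij}\|$ one immediately obtains $\lambda_{\max}(M_1(x)) \le 2\|A_{ij}\|/(x^T A x)$, and because $M_2(x)$ is PSD the rank-one correction can only lower the top eigenvalue, so $\lambda_{\max}([\nabla^2 g_A(x)]_{ij,ij}) \le 2\|A_{ij}\|/(x^T A x)$ is already in the desired form.

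The main obstacle will be bounding the \emph{negative} part of the spectrum of $M_1(x) - M_2(x)$, which requires a Cauchy-Schwarz-type estimate of the form $\|((A+A^T)x)_{ij}\|^2 \le c \cdot (x^T A x) \cdot \|A_{ij}\|$, giving $\lambda_{\max}(M_2(x)) \le c\|A_{ij}\|/(x^T A x)$. When the symmetric part $(A+A^T)/2$ is positive semidefinite this falls out of the classical inequality $(s_{ij}^T(A+A^T)x)^2 \le (s_{ij}^T(A+A^T)s_{ij})(x^T(A+A^T)x)$ applied with $s_{ij}$ ranging over unit vectors; for a general nonnegative $A$ one instead expands $((A+A^T)x)_{ij}$ row by row and exploits $x \ge 0$, $\textbf{1}^Tx = 1$, together with the strict positivity of the diagonal entries, to relate $\|((A+A^T)x)_{ij}\|^2$ back to the quadratic form $x^T A x$ scaled by $\|A_{ij}\|$. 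With this estimate in place one concludes $\|[\nabla^2 g_A(x)]_{ij,ij}\| \le 2\|A_{ij}\|/(x^T A x) \le (2N/\min_k a_{kk})\|A_{ij}\|$, and passing this bound through the mean-value integral yields the claimed Lipschitz constant.
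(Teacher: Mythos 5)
Your overall skeleton coincides with the paper's: compute the Hessian of $g_A$, extract the $(i,j)$ block, bound its operator norm uniformly on the simplex, push that bound through the mean-value integral, and control the denominator via $x^TAx\ge\sum_k a_{kk}x_k^2\ge(\min_k a_{kk})\norm{x}^2\ge\min_k a_{kk}/N$. All of that matches the paper essentially line for line.

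The problem is the step you yourself flag as ``the main obstacle'': bounding the negative part of the spectrum of $\nabla^2_{ij}g_A(x)=M_1(x)-M_2(x)$. Your proposal does not actually prove the required estimate; it only gestures at ``expanding row by row and exploiting $x\ge0$, $\mathbf{1}^Tx=1$.'' Moreover, the route you set up --- bound $\lambda_{\max}(M_2)$ separately by $c\norm{A_{ij}}/(x^TAx)$ and combine --- cannot yield the stated constant. Take $A=\bigl(\begin{smallmatrix}\epsilon&1\\1&\epsilon\end{smallmatrix}\bigr)$ and $x=(1/2,1/2)$ with $\epsilon\to0$: then $M_1\approx4A$ has $\lambda_{\min}(M_1)\approx-\norm{M_1}=-4$ while $\lambda_{\max}(M_2)=8$, so $\lambda_{\max}(M_2)-\lambda_{\min}(M_1)=12$ even though $\norm{M_1-M_2}=4$ (the claimed bound holds with equality here); any additive combination of separate bounds overshoots. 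The argument that does work when it applies is the correlated one: the generalized Cauchy--Schwarz inequality $(u^TAx)^2\le(u^TAu)(x^TAx)$ gives $M_2\preceq2M_1$ and hence $-M_1\preceq M_1-M_2\preceq M_1$. But that inequality is Cauchy--Schwarz for the bilinear form of $A$ and requires $A\succeq0$; in the example above it fails (take $u=(1,-1)/\sqrt2$, for which $u^TAu<0$), and indeed $M_2\not\preceq2M_1$ there. So for a matrix that is merely entrywise nonnegative a genuinely different argument is needed at this point, and your proposal does not supply one: as written the proof is incomplete at its central step. For what it is worth, the paper itself disposes of exactly this step only by asserting ``the same arguments as in [ThiMoe:10]'' (where $A$ is assumed positive definite), so you have correctly located the one place where real work is required.
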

\begin{proof}:
The Hessian of the function $g_A(x)$ is given by \[ \nabla^2 g_A(x)=
\frac{2A}{x^TAx} - \frac{4(Ax)(Ax)^T}{(x^TAx)^2}. \] Note that
$\nabla^2_{ij} g_A(x) = \frac{2A_{ij}}{x^TAx} -
\frac{4(Ax)_{ij}(Ax)_{ij}^T}{(x^TAx)^2}$. With the same arguments as
in \cite{ThiMoe:10} we have that: $\norm{\nabla^2_{ij} g_A(x)} \le
\norm{\frac{2A_{ij}}{x^TAx}}$. From the mean value theorem we
obtain:
\begin{equation*}
 \nabla_{ij}g_A(x+s_{ij}) = \nabla_{ij}g_A(x)+ \int_0^1
  \nabla_{ij}^2 g_A(x+\tau s_{ij}) \ s_{ij} \ \mathrm{d}\tau,
\end{equation*}
for any $x,x+s_{ij} \in \Delta_n$. Taking norm in both sides of the
equality results in:
\begin{align*}
&  \norm{\nabla_{ij}g_A(x+s_{ij}) - \nabla_{ij}g_A(x)} =
 \norm{\left(\int_0^1 \nabla_{ij}^2 g_A(x+\tau s_{ij}) \ \mathrm{d}\tau\right) s_{ij}} \\
&\le \int_0^1 \norm{\nabla_{ij}^2 g_A(x+\tau s_{ij})} \
\mathrm{d}\tau \ \norm{s_{ij}} \le \norm{\frac{2A_{ij}}{x^TAx}} \
\norm{s_{ij}} \ \ \forall x,x+s_{ij} \in \Delta_n.
\end{align*}
Note that $\min\limits_{x \in \Delta_n}x^T A x >0$ since we have:
\begin{equation*}
 \min\limits_{x \in \Delta_n} x^TAx \ge \min\limits_{x \in \Delta_n}
 \left(\min\limits_{1\le i\le n} a_{ii} \right)\norm{x}^2 =\frac{1}{N}\min\limits_{1\le i\le n}
 a_{ii}.
\end{equation*}
and the above result can be easily derived. \qed
\end{proof}
\noindent Based on the previous  notation, the objective function of
the logarithmic formulation \eqref{logform} is given by:
\[ \max\limits_{x \in \Delta_n} f(x) \quad (= g_A(x) - g_B(x)) \quad \text{or} \quad
 \min\limits_{x \in \Delta_n} \bar f(x) \quad (= g_B(x) - g_A(x)). \]
Therefore,  the local Lipschitz constants $L_{ij}$ of function $f$
are estimated very easily and numerically cheap as: \[ L_{ij} \le
L_{ij}^A+L_{ij}^B=\frac{2N}{\min\limits_{1 \le i \le n} a_{ii}}
\norm{A_{ij}} + \frac{2N}{\min\limits_{1 \le i \le n} b_{ii}}
\norm{B_{ij}}  \quad \quad \forall i \not= j.
\]

\noindent In \cite{ThiMoe:10} the authors show that a variant of
difference of convex functions (DC) algorithm is very efficient for
solving the logarithmic formulation \eqref{logform}. We present
extensive numerical experiments for evaluating the performance of
our Algorithm (2-RCD) in comparison with the Algorithm (DC). For
completeness, we also present the Algorithm (DC)  for logarithmic
formulation of (EiCP) in the minimization form from
\cite{ThiMoe:10}: given $x_0 \in \rset^n$, for  $k \geq 0$ do
 \begin{equation*}\label{algorithm}
\begin{split}
 & \textbf{Algorithm (DC) \cite{ThiMoe:10}}\\
&1.\ \text{Set} \ y^k = \left( \mu I_n + \frac{2A}{\langle x^k, Ax^k\rangle} - \frac{2B}{\langle x^k, Bx^k\rangle}\right)x^k,\\
&2. \ \text{Solve the QP}: x^{k+1}= \arg \min_{x \in \rset^n}
\left\{ \frac{\mu}{2}\norm{x}^2 - \langle x, y_k \rangle :
\textbf{1}^T x = 1, x \ge 0 \right\},
 \end{split}
 \end{equation*}
where $\mu$ is a parameter chosen in a preliminary stage of the
algorithm such that the function $x \mapsto \frac{1}{2}\mu
\norm{x}^2 + \ln (x^TAx)$ is convex. In both algorithms we use the
following stopping criterion: $ |f(x^k) - f(x^{k+1})| \le \epsilon$,
where $\epsilon$ is some chosen accuracy.  Note that Algorithm (DC)
is based on full gradient information and  in the application (EiCP)
the most computations consists of matrix vector multiplication and a
projection onto simplex. When at least one matrix $A$ and $B$ is
dense, the computation of the sequence $y^k$ is involved, typically
$\mathcal{O}(n^2)$ operations. However, when these matrices are
sparse the computation can be reduced to  $\mathcal{O}(pn)$
operations, where $p$ is the average number of nonzeros in each row
of the matrix $A$ and $B$. Further, there are efficient algorithms
for computing the projection onto simplex, e.g. block pivotal
principal pivoting algorithm described in \cite{JudRay:08}, whose
arithmetic complexity is of order $\mathcal{O}(n)$. As it appears in
practice, the value of parameter $\mu$ is crucial in the rate of
convergence of Algorithm (DC). The authors in \cite{ThiMoe:10}
provide an approximation of $\mu$ that can be computed easily when
the matrix $A$ from \eqref{logform} is positive definite. However,
for general copositive matrices (as the case of nonnegative
irreducible matrices considered in this paper) one requires the
solution of certain NP-hard problem to obtain a good approximation
of parameter $\mu$. On the other hand, for our Algorithm (2-RCD) the
computation of the Lipschitz constants $L_{ij}$ is very simple and
numerically cheap (see previous lemma). Further, for the scalar case
(i.e. $n=N$) the complexity per iteration of our method applied to
(EiCP) problem is $\mathcal{O}(p)$ in the sparse case.

\setlength{\extrarowheight}{1pt}
\renewcommand{\tabcolsep}{4pt}
\begin{table}[ht]
\centering \caption{Performance of Algorithms (2-RCD) and (DC) on
randomly generated (EiCP) sparse problems with $p=10$ and  random
starting point $x^0$ for different problem dimensions $n$.} {\small
\begin{tabular}{|c|c|c|c|c|c|c|c|c|c|}
\hline
\multirow{2}{*}{$n$}  &\multicolumn{4}{c|}{\textbf{(DC)}} &
\multicolumn{3}{c|}{\textbf{(2-RCD)}} \\
\cline{2-8}
 &$\mu$ &CPU (sec)& iter & $F^*$ & CPU (sec)& full-iter & $F^*$\\
\hline \hline \multirow{4}{*}{$5\cdot 10^3$}& $0.01n$ &
0.0001&1&1.32 & \multirow{4}{*}{0.09} & \multirow{4}{*}{56} &
\multirow{4}{*}{105.20}\\ \cline{2-5} & $n$ & 0.001 & 2& 82.28 & &
&\\ \cline{2-5} & 2$n$ & 0.02 & 18 & 105.21 & & &\\ \cline{2-5}
& 50$n$ & 0.25 & 492 & 105.21 & & &\\
\hline \multirow{4}{*}{$2\cdot 10^4$} & $0.01n$ &  0.01&1&1.56
&\multirow{4}{*}{0.39} &\multirow{4}{*}{50}
&\multirow{4}{*}{73.74}\\ \cline{2-5} & $n$ & 0.01 & 2 & 59.99 & &
&\\ \cline{2-5} & 1.43$n$ & 0.59 & 230 & 73.75 & & &\\ \cline{2-5}
& 50$n$ & 0.85 & 324 & 73.75 & & &\\
\hline \multirow{4}{*}{$5 \cdot 10^4$} & $0.01n$ &  0.01& 1&1.41
&\multirow{4}{*}{1.75} &\multirow{4}{*}{53}
&\multirow{4}{*}{83.54}\\ \cline{2-5} & $n$ & 0.02 & 2& 67.03 & &&\\
\cline{2-5} & 1.43$n$ & 1.53 & 163 & 83.55 & &&\\ \cline{2-5}
& 50$n$ & 2.88 & 324 & 83.57 & &&\\
\hline \multirow{4}{*}{$7.5 \cdot 10^4$} & $0.01n$ & 0.01& 1&2.40
&\multirow{4}{*}{3.60} &\multirow{4}{*}{61}
&\multirow{4}{*}{126.04}\\ \cline{2-5} & $n$ & 0.03 & 2& 101.76 &
&&\\ \cline{2-5} &1.45$n$ & 6.99 & 480 & 126.05 &  &&\\ \cline{2-5}
&50$n$ & 4.72 & 324 & 126.05 &  &&\\
\hline \multirow{4}{*}{$10^5$} & $0.01n$ &  0.02& 1&0.83
&\multirow{4}{*}{4.79} &\multirow{4}{*}{53} &\multirow{4}{*}{52.21
}\\ \cline{2-5} & $n$ & 0.05 & 2&  41.87 & &&\\ \cline{2-5} &
1.43$n$ & 6.48 & 319 & 52.22 & &&\\ \cline{2-5}
& 50$n$ & 6.57  & 323 & 52.22 & &&\\
\hline \multirow{4}{*}{5 $\cdot10^5$} & $0.01n$ &  0.21&1&2.51
&\multirow{4}{*}{49.84} &\multirow{4}{*}{59}
&\multirow{4}{*}{136.37}\\ \cline{2-5} & $n$ & 0.42 & 2& 109.92
&&&\\ \cline{2-5} & 1.43$n$ & 94.34 & 475 & 136.38 &  &&\\
\cline{2-5}
& 50$n$ & 66.61 & 324 & 136.38 &  &&\\
\hline \multirow{4}{*}{7.5 $\cdot10^5$} & $0.01n$ &  0.44& 1&3.11
&\multirow{4}{*}{37.59} &\multirow{4}{*}{38}
&\multirow{4}{*}{177.52}\\ \cline{2-5} & $n$ & 0.81 & 2& 143.31 &
&&\\ \cline{2-5} & 1.43$n$ & 72.80 & 181 & 177.52 & &&\\ \cline{2-5}
& 50$n$ & 135.35 & 323 & 177.54 & &&\\
\hline \multirow{4}{*}{$10^6$} & $0.01n$ & 0.67& 1&3.60
&\multirow{4}{*}{49.67} &\multirow{4}{*}{42}
&\multirow{4}{*}{230.09}\\ \cline{2-5} &$n$ & 1.30& 2& 184.40 & &&\\
\cline{2-5} & 1.43$n$ & 196.38 & 293 & 230.09 &  &&\\ \cline{2-5}
& 50$n$ & 208.39 & 323 & 230.11 &  &&\\
\hline \multirow{4}{*}{$10^7$} & $0.01n$ & 4.69& 1&10.83
&\multirow{4}{*}{758.1} &\multirow{4}{*}{ 41}
&\multirow{4}{*}{272.37 }\\ \cline{2-5} & $n$ & 22.31 & 2 & 218.88
& &&\\ \cline{2-5} & 1.45$n$ & 2947.93 & 325 & 272.37& &&\\
\cline{2-5}
& 50$n$ &  2929.74& 323 &272.38 & &&\\
\hline
\end{tabular}
}
\end{table}

 In Table 1 we compare  the two algorithms: (2-CRD) and (DC). We
generated random sparse symmetric nonnegative and irreducible
matrices of dimension ranging from $n=10^3$ to $n=10^7$ using the
uniform distribution. Each row of the matrices  has  only $p=10$
nonzero entries.  In both algorithms we start from random initial
points. In the table we present for each algorithm the final
objective function value ($F^*$), the number of iterations (iter)
and the necessary CPU time (in seconds) for our computer to execute
all the iterations. As  Algorithm (DC) uses the whole gradient
information to obtain the next iterate, we also report for Algorithm
(2-RCD) the equivalent number of full-iterations which means the
total number of iterations divided by $n/2$ (i.e. the number of
iterations groups $x^0, x^{n/2}, . . . , x^{kn/2}$). Since computing
$\mu$ is very difficult for this type of matrices, we try to tune
$\mu$ in Algorithm (DC). We have tried four values for $\mu$ ranging
from $0.01n$ to $50n$. We have noticed that if $\mu$ is not
carefully tuned   Algorithm (DC) cannot find the optimal value $f^*$
in a reasonable time. Then, after extensive simulations we find an
appropriate value for $\mu$ such that  Algorithm (DC) produces an
accurate approximation of the optimal value. From the table we see
that  our Algorithm (2-RCD)  provides better performance in terms of
objective function values and CPU time (in seconds) than Algorithm
(DC). We also observe that our algorithm is not sensitive w.r.t. the
Lipschitz constants $L_{ij}$ and also w.r.t. the initial point,
while  Algorithm (DC) is very sensitive to the choice of $\mu$ and
the initial point.

\begin{figure}[ht]
\label{fig1} \centering \caption{Performance in terms of function
values of Algorithms (2-RCD) and (DC)  on a randomly generated
(EiCP) problem with $n= 5 \cdot 10^5$: left  $\mu=1.42 \cdot n$  and
right $\mu=50 \cdot n$. }
\includegraphics[width=5.8cm,height=4cm]{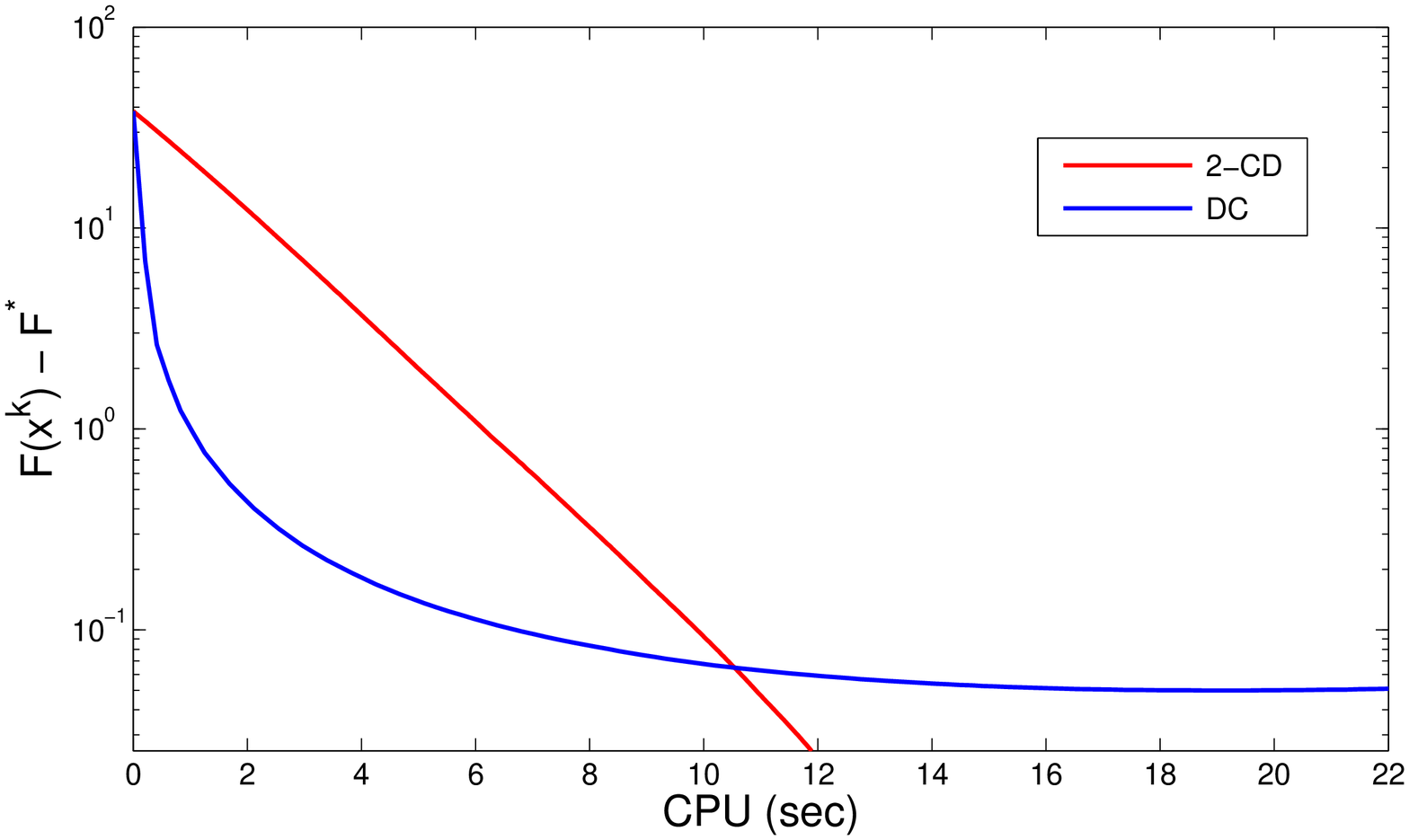}
\includegraphics[width=5.8cm,height=4cm]{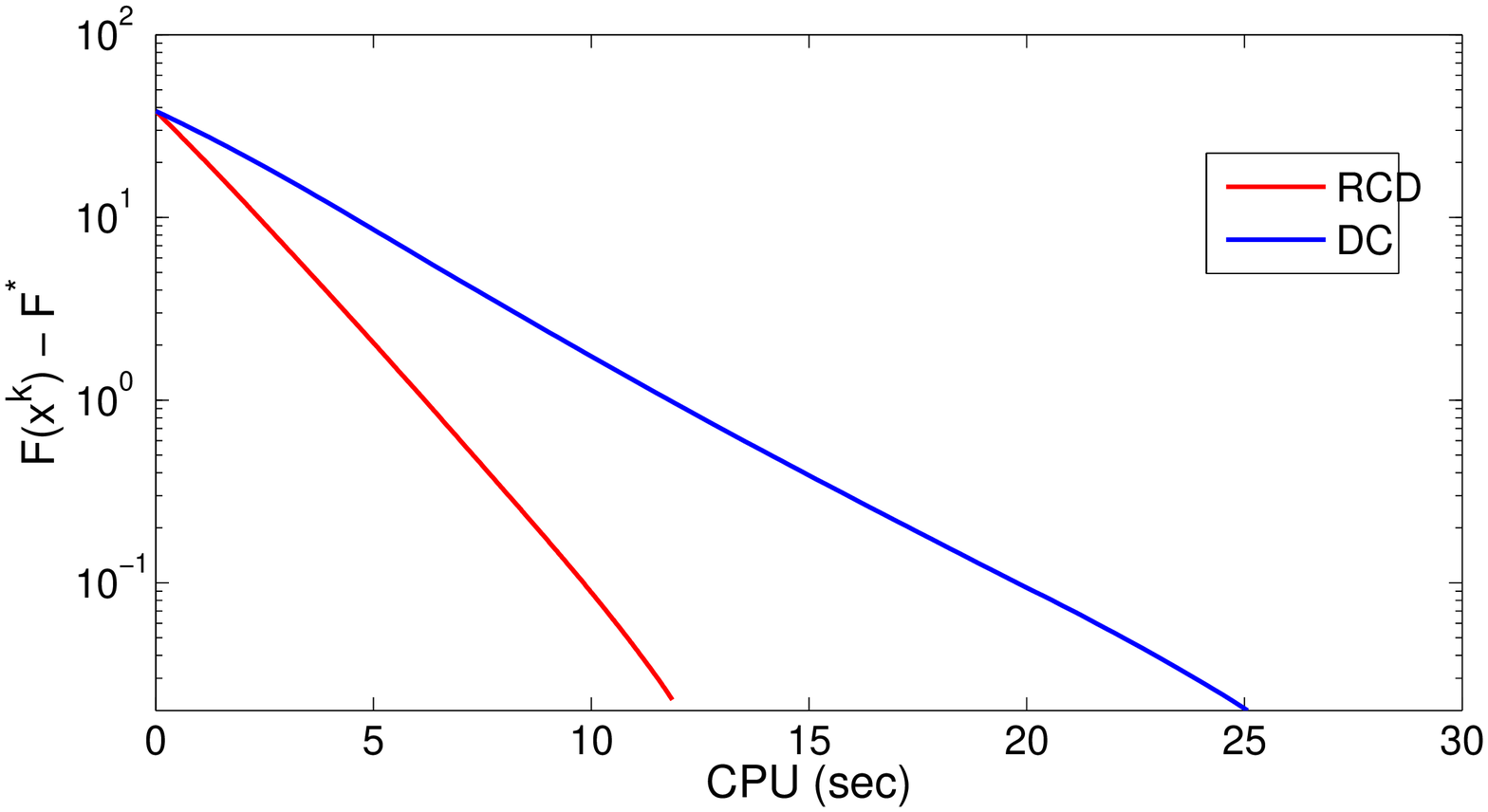}
\end{figure}
 Further, in Fig. 1 we plot the evolution of the objective function
w.r.t.  time for Algorithms (2-RCD) and (DC), in logarithmic scale,
on a random (EiCP) problem with dimension $n=5\cdot 10^5$ (Algorithm
(DC) with parameter left:  $\mu=1.42 \cdot n$; right: $\mu=50 \cdot
n$). For a good choice of  $\mu$ we see that in the initial phase of
Algorithm (DC) the reduction in the objective function is very fast,
but while approaching the optimum it slows down. On the other hand,
due to the sparsity and randomization our proposed algorithm is
faster in numerical implementation than the (DC) scheme.

\begin{figure}[ht]
\label{fig2} \centering \caption{CPU time performance of Algorithms
(2-RCD) and (DC)   for different values of the sparsity $p$ of the
matrix on a randomly generated (EiCP) problem of dimension $n = 2
\cdot 10^4$.}
\includegraphics[width=8cm,height=4cm]{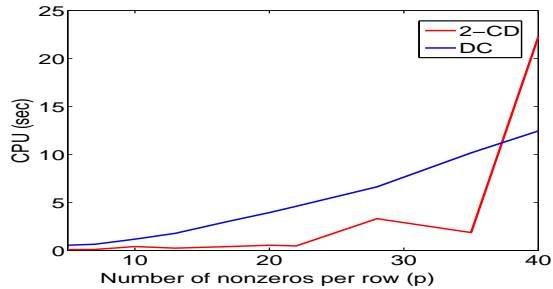}
\end{figure}
In Fig. 2 we plot the evolution of  CPU time, in logarithmic scale,
required for solving the problem  w.r.t. the average number of
nonzeros entries $p$ in each row of the matrix $A$. We see that for
very sparse matrices (i.e. for matrices with relatively small number
of nonzeros per row $p \ll n$), our Algorithm (2-RCD) performs
faster in terms of CPU time  than (DC) method. The main reason is
that our method has a simple implementation, does not require the
use of other algorithms at each iteration and the arithmetic
complexity of an iteration is of order $\mathcal{O}(p)$. On the
other hand, Algorithm (DC) is using the block pivotal principal
pivoting algorithm described in \cite{JudRay:08} at each iteration
for projection on simplex and the arithmetic complexity of an
iteration is of order $\mathcal{O}(pn)$.

We conclude from the theoretical  rate of convergence and the
previous numerical results  that Algorithms (1-RCD) and (2-RCD) are
easier to be implemented and analyzed due to the randomization and
the typically very simple iteration. Furthermore, on certain classes
of problems with sparsity structure, that appear frequently in many
large-scale real applications, the practical complexity  of our
methods is better than that of some well-known methods from the
literature. All these arguments make our algorithms  to be
competitive in the large-scale nonconvex optimization framework.
Moreover, our methods are suited for recently developed
computational architectures (e.g., distributed or parallel
architectures \cite{NecCli:13,RicTac:13}).

%%%%%%%%%%%%%%%%%%%%%%%%%%%%%%%%%%%%%%%%%%%%%%%%%%%%%%%%%%%

\end{document}